\theoremstyle{plain}
\newtheorem{theorem}{Theorem}[section]
\newtheorem{lemma}[theorem]{Lemma}
\newtheorem{definition}[theorem]{Definition}
\newtheorem{proposition}[theorem]{Proposition}
\newtheorem{corollary}[theorem]{Corollary}
\theoremstyle{remark}
\newtheorem{example}[theorem]{Example}
\newtheorem{remark}[theorem]{Remark}
\newtheorem*{acknowledgment}{Acknowledgement}
\numberwithin{equation}{section}
\newcommand{\seclabel}[1]{\label{sec:#1}}   
\newcommand{\Inn}{\mathrm{Inn}}
\title{Semiautomorphic Inverse Property Loops}
\author[M. Greer]{Mark Greer}
\address{Department of Mathematics\\
One Harrison Plaza \\
University of North Alabama\\
Box 5051\\
Florence, AL 35632 USA }
\email{\url{mgreer@una.edu}}
\begin{document}
\allowdisplaybreaks[4]

\begin{abstract}
We define a variety of loops called semiautomorphic, inverse property loops that generalize Moufang and Steiner loops.  We first show an equivalence between a previously studied variety of loops.  Next we extend several known results for Moufang and Steiner loops.  That is, the commutant is a subloop and if $a$ is in the commutant, then $a^{2}$ is a Moufang element, $a^{3}$ is a $c$-element and $a^{6}$ is in the center.  Finally, we give two constructions for semiautomorphic inverse property loops based on Chein's and  de Barros and Juriaans' doubling constructions.
\end{abstract}

\keywords{Semiautomorphic inverse property loops, Moufang loops, Steiner loops, semiautomorphisms}
\subjclass[2010]{20N05}

\maketitle

\section{Introduction}
\seclabel{intro}
A loop $(Q,\cdot)$ consists of a set $Q$ with a binary operation $\cdot : Q\times Q\to Q$ such that $(i)$ for all $a,b\in Q$, the equations $ax = b$ and $ya = b$ have unique solutions $x,y\in Q$, and $(ii)$ there exists $1\in Q$ such that $1x = x1 = x$ for all $x\in Q$.  Standard references for loop theory are \cite{bruck, hala}.  In a loop \emph{Q}, the left and right translations by $x \in Q$ are defined by $yL_{x} = xy$ and $yR_{x} = yx$ respectively.  We define the \emph{multiplication group} of $Q$, Mlt($Q$)$=\left\langle R_{x},L_{x}\mid x\in Q\right\rangle$.  Similarly, we define the \emph{inner mapping group} of $Q$, Inn($Q$)$=$Mlt$_{1}$($Q$)$=$ $\{\theta\in $Mlt($Q$)$\mid 1\theta=1\}$.  

In general, the inner mappings of a nonassociative loop are not automorphisms of the loop (except in the class of \emph{automorphic} loops which are defined
by that very property). However, in some of the various classes of loops which are commonly studied, the action of the inner mapping group still preserves some of the loop structure.

\emph{Moufang loops}, which are easily the most studied class of loops, are defined by the identity $(xy)(zx) = x((yz)x)$ (or other identities equivalent to this). Every inner mapping $\theta$ of a Moufang loop $Q$ is a \emph{semiautomorphism}, that is, $1\theta = 1$ and
\[
(xyx)\theta = x\theta \cdot y\theta \cdot x\theta
\]
for all $x,y\in Q$. (Since Moufang loops are \emph{flexible}, that is, $(xy)x=x(yx)$ for all $x,y$, we may write $xyx$ unambiguously.)

\emph{Steiner loops},  which arise from Steiner triple systems, are loops satisfying the identities $xy=yx$, $x(yx)=y$. Every inner mapping $\theta$ of a Steiner loop is also a semiautomorphism:  $(xyx)\theta = y\theta = x\theta\cdot y\theta\cdot x\theta$.

In this paper, we focus on this property of inner mappings to study a class of loops generalizing both Moufang loops and Steiner loops.

\begin{definition}
A loop $Q$ is said to be a \emph{semiautomorphic, inverse property loop} (or just semiautomorphic IP loop) if
\begin{enumerate}
\item $Q$ is flexible, that is, $(xy)x=x(yx)$ for all $x,y\in Q$;
\item $Q$ has the \emph{inverse property (IP)}, that is, for each
$x\in Q$, there exists $x^{-1}\in Q$ such that $x^{-1}(xy) = y$ and $(yx)x^{-1} = y$ for all $y\in Q$:
\item Every inner mapping is a semiautomorphism, that is, for each $\theta\in Inn(Q)$,
$x\theta\cdot y\theta\cdot x\theta = (x\cdot y\cdot x)\theta$ for all $x,y\in Q$.
\end{enumerate}
\end{definition}

\begin{remark}
We could have dispensed with flexibility as part of the definition and simply fixed a convention for what a semiautomorphism is, such as $x\theta\cdot (y\theta\cdot x\theta) = (x\cdot (y\cdot x))\theta$. However, it is easy to show that flexibility is then a consequence.
\end{remark}

If $\theta$ is a semiautomorphism of a flexible loop $Q$, then for all $x\in Q$, $x\theta = (x x^{-1} x)\theta = x\theta\cdot x^{-1}\theta\cdot x\theta$, and canceling gives $1 = x\theta \cdot x^{-1}\theta$. Thus if we define the inversion map $J : Q\to Q$ by $xJ = x^{-1}$, we have $\theta^J = \theta$ for any semiautomorphism $\theta$.

It follows that any semiautomorphic IP loop is an example of a variety of loops which have already appeared in the literature called ''$J$-loops" or ``RIF loops'' (RIF = \textbf{R}espects \textbf{I}nverses and \textbf{F}lexible). $J$-loops were introduced in \cite{ikuta} and RIF loops were introduced in \cite{KKP}.  Commutative RIF loops were studied in \cite{KV09}.  Recalling that a loop is \emph{diassociative} if any subloop generated by at most two elements is associative, we have the following, which follows from the main result of \cite{KKP}.

\begin{proposition}\cite{KKP}.
Every semiautomorphic IP loop is diassociative.
\end{proposition}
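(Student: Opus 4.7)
The plan is to reduce the statement to the main theorem of \cite{KKP}, namely that every RIF loop is diassociative. Thus the task is really to check that any semiautomorphic IP loop $Q$ is a RIF loop, i.e.\ that $Q$ is flexible and that every inner mapping of $Q$ ``respects inverses'' in the sense that $\theta J = J\theta$ for each $\theta\in\mathrm{Inn}(Q)$, or equivalently $\theta^J = \theta$.

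Flexibility is built into the definition of a semiautomorphic IP loop, so only the respects-inverses condition needs to be verified. However, this is already essentially done in the paragraph preceding the proposition: if $\theta$ is any semiautomorphism of a flexible loop with the inverse property, then evaluating the semiautomorphism identity on the triple $(x,x^{-1},x)$ gives $x\theta = x\theta\cdot x^{-1}\theta\cdot x\theta$, and cancellation yields $x^{-1}\theta = (x\theta)^{-1}$, i.e.\ $\theta J = J\theta$. Since in a semiautomorphic IP loop \emph{every} inner mapping is a semiautomorphism by hypothesis, every inner mapping respects inverses. Combined with flexibility, this places $Q$ inside the variety of RIF loops.

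Invoking the main theorem of \cite{KKP}, which asserts that every RIF loop is diassociative, the conclusion follows. The substantive work, of course, is hidden in that citation: proving diassociativity of RIF loops is the genuinely hard step and is precisely the content of \cite{KKP}. What needs to be written here is only the short verification that the respects-inverses property is automatic from the semiautomorphism axiom, after which the result is immediate.
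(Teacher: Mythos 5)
Your proposal matches the paper's reasoning exactly: the paper likewise observes (in the paragraph preceding the proposition) that applying the semiautomorphism identity to $(x,x^{-1},x)$ and cancelling shows every semiautomorphism preserves inverses, so a semiautomorphic IP loop is a RIF loop, and then cites the main result of \cite{KKP} for diassociativity. No gaps; the approach is the same.
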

\begin{remark}
Throughout, we will make explicit use of diassociativity for simplifications, without reference.
\end{remark}

Our first main result, proved in {\S}\ref{sect2} is the converse of our observation that every semiautomorphic IP loop is a RIF loop. We state this as the following characterization (eschewing the somewhat cryptic ``RIF'' terminology).

\begin{theorem}
Let $Q$ be a loop. The following are equivalent.
\begin{enumerate}
\item $Q$ is a semiautomorphic IP loop;
\item $Q$ is a flexible IP loop such that $\theta^J = \theta$ for all $\theta\in Inn(Q)$.
\end{enumerate}
\label{samedef}
\end{theorem}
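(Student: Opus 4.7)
The implication (1) $\Rightarrow$ (2) is essentially contained in the paragraph preceding the theorem. Flexibility and the inverse property are part of the definition of a semiautomorphic IP loop; applying any semiautomorphism $\theta$ to $x = x\cdot x^{-1}\cdot x$ gives $x\theta = x\theta \cdot x^{-1}\theta \cdot x\theta$, and cancellation via IP yields $x^{-1}\theta = (x\theta)^{-1}$, i.e., $\theta^J = \theta$.

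For the substantive direction (2) $\Rightarrow$ (1), suppose $Q$ is a flexible IP loop with $\theta^J = \theta$ for every $\theta \in \Inn(Q)$. My plan has three stages. First, I recognize that these hypotheses place $Q$ in the variety of RIF loops of \cite{KKP}, so the main result of that paper gives diassociativity, allowing me to compute freely inside any two-generated subloop. Second, I note that the set of semiautomorphisms of a flexible loop is closed under composition: if $\phi,\psi$ are semiautomorphisms then
\[
(xyx)(\phi\psi) = (x\phi\cdot y\phi\cdot x\phi)\psi = x\phi\psi\cdot y\phi\psi\cdot x\phi\psi.
\]
So it suffices to verify the semiautomorphism identity on a generating set for $\Inn(Q)$, and I would use the standard generators $T_x = L_x^{-1}R_x$, $L_{x,y} = L_xL_yL_{yx}^{-1}$, and $R_{x,y} = R_yR_xR_{xy}^{-1}$.

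The real work, then, is checking the semiautomorphism identity for each generator. For $T_x$, diassociativity gives the closed form $aT_x = x^{-1}ax$, and what must be shown is the three-variable identity $x^{-1}(uvu)x = (x^{-1}ux)(x^{-1}vx)(x^{-1}ux)$, which is genuinely outside the scope of two-generator associativity. The plan is to combine the anti-automorphic inverse property $(ab)^{-1} = b^{-1}a^{-1}$ of IP loops with the hypothesis $\theta^J = \theta$: write the relevant expression as the inverse of a shorter word, commute $\theta$ past $J$, and re-expand, matching both sides via anti-automorphic inversion. An analogous but more intricate computation treats $L_{x,y}$ and $R_{x,y}$, where the inner mapping is itself a word of length three in translation operators, so the inverse-commuting hypothesis has to be pushed through several times.

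The main obstacle is precisely this last stage: bridging the two-variable machinery of diassociativity with the three-variable nature of the semiautomorphism identity. The hypothesis $\theta^J = \theta$ is the essential new ingredient that makes this bridge possible, and identifying the right sequence of substitutions for each of $T_x$, $L_{x,y}$, and $R_{x,y}$ is where care is required; everything else reduces to routine group-theoretic manipulation inside a diassociative loop.
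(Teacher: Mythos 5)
Your direction (1) $\Rightarrow$ (2) matches the paper's observation exactly, and your reduction of (2) $\Rightarrow$ (1) to the generators $T_x$, $L_{x,y}$, $R_{x,y}$ (semiautomorphisms fixing $1$ are closed under composition) is sound. The gap is the third stage, which is the entire substance of the theorem and which you leave as an unexecuted plan. You correctly identify the difficulty---the semiautomorphism identity for $T_x$ is the genuinely three-variable statement $x^{-1}(uvu)x=(x^{-1}ux)(x^{-1}vx)(x^{-1}ux)$---but the proposed remedy (``write the expression as the inverse of a shorter word, commute $\theta$ past $J$, and re-expand'') does not engage with it. Applying the hypothesis $\theta^J=\theta$ to $T_x$ itself yields only $x^{-1}a^{-1}x=(x^{-1}ax)^{-1}$, which is already a consequence of diassociativity; the force of the hypothesis comes from applying it to $R_{x,y}$ and $L_{x,y}$ and converting the resulting three-variable relations into a usable operator form. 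That conversion is precisely the content of Lemma \ref{originaldef} (quoted from \cite{ikuta,KKP}): the hypothesis is equivalent, in an IP loop, to flexibility together with $R_{xy}L_{xy}=L_yL_xR_xR_y$ and $L_{xy}R_{xy}=R_xR_yL_yL_x$, i.e.\ to \eqref{RIF1} and \eqref{RIF2}. You never derive these identities or any substitute for them, so the ``bridge'' you describe between two-variable diassociativity and the three-variable target is named but not built.

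For comparison, the paper's proof of the hard direction runs: condition (2) is condition (1) of Lemma \ref{originaldef}, hence \eqref{RIF1} and \eqref{RIF2} hold; Theorem \ref{T1} then verifies the generators via the observation that $\theta$ is a semiautomorphism if and only if $P_y\theta=\theta P_{y\theta}$ for all $y$, followed by a short computation pushing $P_y$ through $T_x$ and $R_{x,z}$ using \eqref{RIF1} and \eqref{RIF2}. Until you either prove the equivalence of Lemma \ref{originaldef} yourself or exhibit the explicit ``sequence of substitutions'' you allude to for each generator, your argument for stage three is a statement of intent rather than a proof.
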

 
The \emph{commutant} of a loop $Q$ is the set $C(Q) = \{ a\in Q\ |\ ax=xa\ \forall x\in Q\}$.  In general, the commutant of a loop is not a subloop, although it is known to be so in certain cases, such as for Moufang loops. In {\S}\ref{sect3}, we study the commutant of a semiautomorphic IP loop and show that it is a subloop (Theorem \ref{SAIPcenter}). Toward that end, we also show that for any $a\in C(Q)$, $a^2$ is a Moufang element (Theorem \ref{MoufangElement}). This immediately gives us that for each $a\in C(Q)$, $a^6\in Z(Q)$, where $Z(Q)$ denotes the center of $Q$ (Corollary \ref{SAIPcelement}). This simultaneously generalizes two results: that in a Moufang loop, the cube of any commutant element is central \cite{bruck}, and that in a commutative semiautomorphic IP loop, the sixth power of any element is central \cite{KV09}.
 
In {\S}\ref{sect4} we discuss two construction of semiautomorphic IP loops. There is a well-known doubling construction of Chein which builds nonassociative Moufang loops from nonabelian groups. The construction itself makes sense even when one starts with a loop instead of a group. It turns out that if one applies the construction to a semiautomorphic IP loop, the result is another semiautomorphic IP loop (Theorem \ref{Cconstruction}).  In particular, this allows us to construct nonMoufang, nonSteiner, semiautomorphic IP loops by starting with nonassociative Moufang loops.
 
We then give our second construction, which is based on another doubling technique of de Barros and Juriaans. It was already noted (without human proof) that applying the de Barros-Juriaans construction to a group gives what we are now calling a semiautomorphic IP loop. Here we show that just as with the Chein construction, starting with a semiautomorphic IP loop in the de Barros-Juriaans construction yields another semiautomorphic IP loop (Theorem \ref{dBJconstruction}).  In {\S}\ref{connections}, we consider connections between the two constructions.  Specifically, we show that if we start with a semiautomorphic IP loop, apply the de Barros-Juriaans construction and then apply the Chein construction to the result, we end up with the same loop up to isomorphism as if we had applied the Chein construction twice (Theorem \ref{twice}).
 
Finally in {\S}\ref{conclusion} we give conditions on when our constructions give commutative loops.  We also use our constructions to give some concrete examples of nonMoufang, nonSteiner, semiautomorphic IP loops.
\section{Semiautomorphic Inverse Property Loops}
\label{sect2}
Throughout juxtaposition binds more tightly than an explicit $\cdot$ so that, for instance, $xy\cdot z$ means $(xy)z$.  It is well known that the inner mapping group of any loop is generated by all inner mappings of the form $L_{x,y},R_{x,y},$ and $T_{x}$ \cite{bruck}, where
\begin{equation*}
T_{x}=R_{x}L_{x}^{-1} \qquad L_{x,y} = L_{x}L_{y}L_{yx}^{-1} \qquad R_{x,y}=R_{x}R_{y}R_{xy}^{-1}.
\end{equation*}
\begin{lemma}\cite{ikuta,KKP}.  Let $Q$ be an IP loop.  Then the following are equivalent:
\begin{itemize}
\item [$(\ref{originaldef}.1)$] For all $\theta \in Inn(Q)$, $x^{-1}\theta=(x\theta)^{-1}$.\label{RIF.1}
\item [$(\ref{originaldef}.2)$] $Q$ is flexible and $R_{x,y} = L_{x^{-1},y^{-1}}$ for all $x,y \in Q$.\label{RIF.2}
\item [$(\ref{originaldef}.3)$] $R_{xy}L_{xy} = L_{y}L_{x}R_{x}R_{y}$ for all $x,y \in Q$. \label{RIF.3}
\item [$(\ref{originaldef}.4)$] $L_{xy}R_{xy} = R_{x}R_{y}L_{y}L_{x}$ for all $x,y \in Q$. \label{RIF.4}
\end{itemize}
\label{originaldef}
\end{lemma}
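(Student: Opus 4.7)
The plan is to prove the three equivalences $(1) \Leftrightarrow (2)$, $(2) \Leftrightarrow (3)$, and $(3) \Leftrightarrow (4)$. Two IP-based tools will be used throughout: $L_a^{-1} = L_{a^{-1}}$ and $R_a^{-1} = R_{a^{-1}}$ for all $a \in Q$, together with the conjugation formulas $JL_aJ = R_{a^{-1}}$ and $JR_aJ = L_{a^{-1}}$, where $J$ is the inversion map $x \mapsto x^{-1}$; both formulas are immediate consequences of IP.

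For $(1) \Leftrightarrow (2)$, I would rewrite $(1)$ as the commutation relation $J\theta = \theta J$ for all $\theta \in \Inn(Q)$, and since this property is inherited by products and inverses it suffices to check it on the generators $T_x, L_{x,y}, R_{x,y}$. Conjugating $T_x = R_xL_x^{-1}$ by $J$ factor-by-factor gives $JT_xJ = L_{x^{-1}}R_x$, so the condition reduces to $L_{x^{-1}}R_x = R_xL_{x^{-1}}$, i.e.\ flexibility. Conjugating $R_{x,y} = R_xR_yR_{xy}^{-1}$ likewise gives $JR_{x,y}J = L_{x^{-1}}L_{y^{-1}}L_{xy} = L_{x^{-1},y^{-1}}$, yielding the condition $R_{x,y} = L_{x^{-1},y^{-1}}$. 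The parallel computation on $L_{x,y}$ produces $L_{x,y} = R_{x^{-1},y^{-1}}$, which is the same equation after substituting $x \leftrightarrow x^{-1}$, $y \leftrightarrow y^{-1}$. Collecting gives exactly $(2)$.

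For $(2) \Leftrightarrow (3)$, expand $L_{x^{-1},y^{-1}} = L_x^{-1}L_y^{-1}L_{xy}$. Multiplying the equation in $(2)$ on the right by $R_{xy}$ and on the left by $L_yL_x$ produces $L_yL_xR_xR_y = L_{xy}R_{xy}$, which becomes $(3)$ after using flexibility to commute $L_{xy}$ and $R_{xy}$. Conversely, specializing the equational form of $(3)$ at $z = x^{-1}$ and right-cancelling $y$ yields $xy = (x(yx^{-1}))x$; setting $w = yx^{-1}$ gives flexibility $x(wx) = (xw)x$, after which the rearrangement above is reversible.

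For $(3) \Leftrightarrow (4)$, the two identities are inverses of one another as permutations: inverting $(3)$, pushing inversion inside each translation via the IP formulas $L_a^{-1} = L_{a^{-1}}$ and $R_a^{-1} = R_{a^{-1}}$, and relabeling $x \mapsto y^{-1}$, $y \mapsto x^{-1}$ produces $(4)$, and the reverse direction is identical. I expect the main technical hazard to lie in $(1) \Leftrightarrow (2)$ --- keeping composition order straight in the $J$-conjugations, and confirming that no extra relations slip in from the $L_{x,y}$ generator beyond $R_{x,y} = L_{x^{-1},y^{-1}}$ --- while the remaining passes are routine algebraic manipulation.
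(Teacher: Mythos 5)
Your argument is correct, but note that the paper itself offers no proof of this lemma: it is quoted from \cite{ikuta,KKP} with a citation only, so there is nothing internal to compare against. Your proof is a reasonable reconstruction of the standard one. The key observations all check out: condition $(\ref{originaldef}.1)$ is the commutation relation $J\theta=\theta J$, whose validity on all of $\Inn(Q)$ reduces to the generators $T_x$, $L_{x,y}$, $R_{x,y}$ because the centralizer of $J$ is a subgroup; the conjugation formulas $JL_aJ=R_{a^{-1}}$ and $JR_aJ=L_{a^{-1}}$ do follow from IP (via the antiautomorphic inverse property $(xy)^{-1}=y^{-1}x^{-1}$, which holds in every IP loop); the generator $T_x$ yields flexibility, $R_{x,y}$ yields $R_{x,y}=L_{x^{-1},y^{-1}}$, and $L_{x,y}$ yields only the inverse-substituted form of the same identity, so no extra relations appear. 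The passage $(\ref{originaldef}.2)\Leftrightarrow(\ref{originaldef}.3)$ by expanding $L_{x^{-1},y^{-1}}=L_x^{-1}L_y^{-1}L_{xy}$ and rearranging is sound, and your extraction of flexibility from $(\ref{originaldef}.3)$ by setting $z=x^{-1}$ and cancelling $y$ is exactly the step needed to make that direction reversible. Finally, $(\ref{originaldef}.3)\Leftrightarrow(\ref{originaldef}.4)$ by inverting both sides as permutations and relabelling $x\mapsto y^{-1}$, $y\mapsto x^{-1}$ is correct since inversion is a bijection of $Q$. I see no gaps; the only stylistic remark is that you should state explicitly at the outset that IP implies $(xy)^{-1}=y^{-1}x^{-1}$, since both the conjugation formulas and the relabelling step silently rely on it.
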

By flexibility, the left hand sides of (\ref{RIF.3}.3) and (\ref{RIF.4}.4) are equal and thus we can equate (\ref{RIF.3}.3) with either side of (\ref{RIF.4}.4).  For convenience, define
\begin{equation*}
P_{x}=L_{x}R_{x}=R_{x}L_{x}
\end{equation*}
by flexibility.  Then in an IP loop conditions (\ref{RIF.3}.3) and (\ref{RIF.4}.4) can be written as
\begin{align}
\tag{RIF1} L_x P_y R_x &= P_{yx} \label{RIF1}, \\
\tag{RIF2} R_x P_y L_x &= P_{xy} \label{RIF2}.
\end{align}
We will use the RIF acronym as an equation label for historical reference.  We also use the ARIF condition,
\begin{equation}\tag{ARIF}
R_{x}R_{yxy}=R_{xyx}R_{y}, \qquad L_{x}L_{yxy}=L_{xyx}L_{y},
\label{ARIF}
\end{equation}
which hold in any loop satisfying the conditions of Lemma \ref{originaldef}; see \cite{KKP}.

\begin{theorem}
Let $Q$ be an IP loop satisfying \eqref{RIF1} and \eqref{RIF2}.  Then every inner mapping is a semiautomorphism.
\label{T1}
\end{theorem}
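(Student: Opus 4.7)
The plan is to reduce to showing each standard generator $T_a$, $L_{a,b}$, $R_{a,b}$ of $\Inn(Q)$ is a semiautomorphism. Semiautomorphisms that fix $1$ are closed under composition and under inversion (both are direct verifications from the defining identity $(xyx)\theta = x\theta\cdot y\theta\cdot x\theta$), so they form a subgroup of the symmetric group on $Q$; combined with Bruck's generating set for $\Inn(Q)$ recalled just before the theorem, this reduction is valid. Throughout I will work with the equivalent operator form of the semiautomorphism condition: $\theta$ is a semiautomorphism iff $\theta^{-1}P_x\theta = P_{x\theta}$ for every $x\in Q$.

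The computational engine is a pair of rewriting rules obtained from \eqref{RIF1} and \eqref{RIF2} by substitution in an IP loop:
\[
L_a^{-1}P_x \;=\; P_{xa^{-1}}R_a, \qquad R_a^{-1}P_x \;=\; P_{a^{-1}x}L_a,
\]
together with $L_aR_a = P_a$ from flexibility (and diassociativity of two-generated subloops, which holds in RIF loops by \cite{KKP}). These let one ``push'' a $P_x$ past an $L_a^{\pm 1}$ or $R_a^{\pm 1}$ while updating its subscript. For $\theta = T_a = R_aL_a^{-1}$, applying them in sequence to $L_aR_a^{-1}P_xR_aL_a^{-1}$, using $L_aR_a = P_a$ and $R_a^{-1}P_a = L_a$, and using diassociativity of $\langle a,x\rangle$ to rewrite $(a^{-1}x)a$ as $a^{-1}(xa)$, collapses the expression to $P_{a^{-1}(xa)} = P_{xT_a}$. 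For $\theta = L_{a,b} = L_aL_bL_{ba}^{-1}$ one runs the same machinery three times (once for each of $L_a$, $L_b$, $L_{ba}$) to obtain
\[
L_{a,b}^{-1}P_xL_{a,b} \;=\; P_{((xa^{-1})b^{-1})(ba)}.
\]

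The main obstacle I anticipate is matching this subscript with the value $xL_{a,b} = (ba)^{-1}(b(ax))$, which a priori looks like a genuine three-variable identity in $Q$. It is bypassed by invoking Lemma \ref{originaldef}(2): the identity $R_{x,y} = L_{x^{-1},y^{-1}}$ gives $L_{a,b} = R_{a^{-1},b^{-1}}$, and using $R_c^{-1} = R_{c^{-1}}$ and $(a^{-1}b^{-1})^{-1} = ba$ in an IP loop expands this to $L_{a,b} = R_{a^{-1}}R_{b^{-1}}R_{ba}$; reading off the image of $x$ under this composition gives precisely $((xa^{-1})b^{-1})(ba)$. Hence $L_{a,b}^{-1}P_xL_{a,b} = P_{xL_{a,b}}$, so $L_{a,b}$ is a semiautomorphism, and the same lemma gives $R_{a,b} = L_{a^{-1},b^{-1}}$, reducing the remaining generator to the case just handled and completing the proof.
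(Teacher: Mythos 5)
Your proposal is correct and follows essentially the same route as the paper: both reduce to the standard generators of $\Inn(Q)$, recast the semiautomorphism condition in the operator form $P_x\theta = \theta P_{x\theta}$ (your conjugation version $\theta^{-1}P_x\theta = P_{x\theta}$ is the same thing), push $P$ through the translation factors of each generator via \eqref{RIF1} and \eqref{RIF2}, and invoke $(\ref{originaldef}.2)$ to identify the residual product of translations with the inner mapping itself. The only difference is the mirror-image choice of handling $L_{a,b}$ directly and reducing $R_{a,b}$ to it via $R_{a,b}=L_{a^{-1},b^{-1}}$, where the paper treats $R_{x,z}$ directly and reduces the $L$'s.
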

\begin{proof}  By (\ref{originaldef}.2), it is enough to show that each $T_{x}$ and each $R_{x,z}$ is a semiautomorphism.  Note that an inner mapping $\theta$ is a semiautomorphism \emph{if and only if} $P_{x}\theta=\theta P_{x\theta}$ for all $x\in Q$.  First, $1=1T_{x}=1R_{x,y}=1L_{x,y}$ by definition.  Thus we compute
\begin{equation*}
P_y T_x 
=\underbracket[0.75pt]{P_y R_x} L_{x^{-1}}
\stackrel{\eqref{RIF1}}{=}  L_{x^{-1}} \underbracket[0.75pt]{P_{yx} L_{x^{-1}}}
\stackrel{\eqref{RIF2}}{=}  L_{x^{-1}} R_x P_{x^{-1}yx}\\
= T_x P_{y T_x}.
\end{equation*}
For $R_{x,z}$, we compute
\begin{alignat*}{3}
P_y R_{x,z} 
&= \underbracket[0.75pt]{P_y R_x} R_z R_{(xz)^{-1}}
&&\stackrel{\eqref{RIF1}}{=} L_{x^{-1}} \underbracket[0.75pt]{P_{yx} R_z} R_{(xz)^{-1}}\\
&\stackrel{\eqref{RIF1}}{=} L_{x^{-1}} L_{z^{-1}} \underbracket[0.75pt]{P_{yx\cdot z} R_{(xz)^{-1}}}
&&\stackrel{\eqref{RIF1}}{=} L_{x^{-1}} L_{z^{-1}} L_{xz} P_{(yx\cdot z)(xz)^{-1}}\\
&= \underbracket[0.75pt]{L_{x^{-1}, z^{-1}}} P_{y R_{x,z}}
&&\stackrel{(\ref{originaldef}.2)}{=} R_{x,z} P_{y R_{x,z}}.
\end{alignat*}
\end{proof}
Hence, we have shown that semiautomorphic IP loops coincide with the variety formerly known as RIF loops.
\begin{proof}[Proof of Theorem \ref{samedef}.]
This follows immediately from Theorem \ref{T1} and the earlier observation that semiautomorphisms preserve inverses.
\end{proof} 
\section{Commutant of a Semiautomorphic loop}
\label{sect3}
Let $Q$ be a loop. Then we have the following subsets of interest.
\begin{itemize}
\item [$\bullet$] The \emph{commutant of Q}, 
\[C(Q)=\{a\in Q\mid ax=xa\ \forall x\in Q\}=\{a\in Q \mid L_{a}=R_{a}\}.\]
\item [$\bullet$] The \emph{nucleus of Q},
\[N(Q)=\{a\in Q\mid a\cdot xy=ax\cdot y,x\cdot ay=xa\cdot y,x\cdot ya=xy\cdot a,\ \forall x,y\in Q\}.\]
\item [$\bullet$] The \emph{center of Q}, $Z(Q)=C(Q)\cap N(Q)$.
\item [$\bullet$] The set of \emph{Moufang elements},
\[M(Q)=\{a\in Q\mid a(xy\cdot a)=ax\cdot ya, a(x\cdot ay)=(ax\cdot a)y, (ya\cdot x)a=y(a\cdot xa),\ \forall x,y\in Q\}.\]
\end{itemize}

It is well known that $Z(Q)$ and $N(Q)$ are always subgroups \cite{bruck}.  The set $M(Q)$ of Moufang elements is also a subloop of any loop \cite{JD}.  

In a Moufang loop $Q$, it is noted in \cite{bruck} that $C(Q)$ is a subloop and an explicit proof is given in \cite{hala}.  In this section we will prove the same result for semiautomorphic IP loops.

We note that in an IP loop $Q$, to verify that a subset $S$ is a subloop, it is sufficient to check that $S$ is closed under multiplication and taking inverses.  
\begin{theorem}
The commutant of a semiautomorphic IP loop is a subloop.
\label{center}
\end{theorem}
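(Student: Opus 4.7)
The plan is to use the reduction noted just before the theorem: to show $C(Q)$ is a subloop it suffices to verify closure under inversion and under multiplication. Inversion is immediate from the IP. Fix $a \in C(Q)$, so $ax = xa$ for every $x$; substituting $a^{-1}y$ for $x$ gives $a(a^{-1}y) = (a^{-1}y)a$, i.e.\ $y = (a^{-1}y)a$ by IP on the left, and right-multiplying by $a^{-1}$ (with a second IP cancellation) yields $a^{-1}y = ya^{-1}$, so $a^{-1} \in C(Q)$.

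The substantive step is closure under multiplication. Fix $a, b \in C(Q)$; the goal is $L_{ab} = R_{ab}$. Centrality gives $L_a = R_a$, $L_b = R_b$, $P_a = L_a^2$, $P_b = L_b^2$, and $ab = ba$, so applying \eqref{RIF1} with $(y,x) = (b,a)$ and \eqref{RIF2} with $(x,y) = (a,b)$ both collapse to the symmetric factorization
\[
P_{ab} \;=\; P_{ba} \;=\; L_a P_b R_a \;=\; L_a L_b^2 L_a .
\]
From here I would try to convert this symmetric data into the non-symmetric equality $L_{ab} = R_{ab}$, either by direct manipulation with the RIF and ARIF identities, or via the inner mapping $T_y = R_y L_y^{-1}$. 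Since $a \in C(Q)$ gives $aT_y = y^{-1}(ay) = y^{-1}(ya) = a$, and similarly $bT_y = b$, the semiautomorphism $T_y$ fixes both $a$ and $b$; if one can further establish $T_y(ab) = ab$ for every $y$, this is precisely $(ab)y = y(ab)$. To promote $T_y(a) = a$ and $T_y(b) = b$ to $T_y(ab) = ab$ I would apply the semiautomorphism identity $T_y(uvu) = (uT_y)(vT_y)(uT_y)$ to palindromic products inside $\langle a,b\rangle$ (a group by diassociativity, with $aba = a^2 b$ and $bab = ab^2$), combine these with the factorization of $P_{ab}$, and, if needed, invoke the Moufang-element autotopism $(L_{a^2},L_{a^2},L_{a^2}^2)$ furnished by Theorem \ref{MoufangElement}.

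The main obstacle is exactly this upgrade step. On its own, the factorization $P_{ab} = L_a L_b^2 L_a$ only records $L_{ab}R_{ab} = R_{ab}L_{ab}$, which is flexibility and strictly weaker than $L_{ab} = R_{ab}$. Likewise, since semiautomorphisms are not homomorphisms, the relations $aT_y = a$ and $bT_y = b$ do not formally yield $T_y(ab) = ab$; that conclusion has to be extracted by a careful combination of the RIF identities, diassociativity inside $\langle a,b\rangle$, and the Moufang-element structure of $a^2$ and $b^2$. Finding the correct palindromic decomposition of $ab$ into $T_y$-invariant ingredients — or equivalently, the correct sequence of RIF/ARIF manipulations that converts the symmetric information about $P_{ab}$ into the asymmetric equality $L_{ab}=R_{ab}$ — is where the real work of the proof lies.
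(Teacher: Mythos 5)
Your reduction to closure under inversion and multiplication is fine, and you have correctly assembled the relevant ingredients (the factorization $P_{ab}=R_aP_bL_a$ from \eqref{RIF2}, the Moufang-element property of squares of commutant elements, diassociativity). But as you yourself concede, the proposal stops exactly at the decisive step: you never actually derive $L_{ab}=R_{ab}$, and the route you sketch --- showing $(ab)T_y=ab$ from $aT_y=a$ and $bT_y=b$ --- is just a restatement of the goal, since $(ab)T_y=ab$ for all $y$ \emph{is} the assertion $ab\in C(Q)$. So this is a genuine gap, not a stylistic difference.

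The missing idea is to evaluate $xP_{ab}=(ab\cdot x)\cdot ab$ a second way and then cancel the right factor $ab$. Concretely, by \eqref{RIF2}, $xP_{ab}=xR_aP_bL_a=a(b\cdot xa\cdot b)$; since $a,b\in C(Q)$ and $\langle xa,b\rangle$ is a group, this equals $(xa\cdot b^2)\cdot a$; now Theorem \ref{MoufangElement} together with the characterization in Lemma \ref{othermoufelemdef} (namely $(yx\cdot c)x=y\cdot xcx$ for a Moufang element $c$, applied with $c=b^2$... more precisely $(xa\cdot b^2)a = x\cdot ab^2a$) reassociates this to $x\cdot ab^2a=x(ab)^2=(x\cdot ab)\cdot ab$. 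Comparing with $(ab\cdot x)\cdot ab$ and cancelling $ab$ on the right gives $ab\cdot x=x\cdot ab$. In other words, the ``upgrade step'' you were searching for is not a palindromic decomposition of $ab$ inside $\langle a,b\rangle$ but a computation of the conjugation-like map $P_{ab}$ applied to an arbitrary $x$, in which the Moufang property of $b^2$ supplies precisely the one reassociation that breaks the left--right symmetry.
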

\noindent To this end, we will first prove the following.
\begin{theorem}
Let $Q$ be a semiautomorphic IP loop and let $a\in C(Q)$, then $a^{2}$ is a Moufang element.
\label{MoufangElement}
\end{theorem}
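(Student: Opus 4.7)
The plan is to verify each of the three Moufang identities at $m=a^{2}$:
\[
(\mathrm{M}1)\ a^{2}(xy\cdot a^{2})=a^{2}x\cdot ya^{2},\qquad
(\mathrm{M}2)\ a^{2}(x\cdot a^{2}y)=(a^{2}x\cdot a^{2})y,\qquad
(\mathrm{M}3)\ (ya^{2}\cdot x)a^{2}=y(a^{2}\cdot xa^{2}).
\]
Since $a\in C(Q)$ we have $L_{a}=R_{a}$, and diassociativity then yields $a^{2}\in C(Q)$, $L_{a^{2}}=R_{a^{2}}$, $P_{a}=L_{a}^{2}=L_{a^{2}}$, $aza=a^{2}z$, and $a^{2}xa^{2}=a^{4}x$ for all $x,z$. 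In operator form (M2) reads $L_{a^{2}}L_{x}L_{a^{2}}=L_{a^{4}x}$ and (M3) reads $R_{a^{2}}R_{x}R_{a^{2}}=R_{a^{4}x}$. Conjugating the (M2)-form by the inversion map $J$ (using $JL_{u}J=R_{u^{-1}}$), then substituting $x\mapsto x^{-1}$ and $a\mapsto a^{-1}$ (both admissible since $a^{-1}\in C(Q)$), converts (M2) for $a^{-1}$ into (M3) for $a$. So it suffices to prove (M1) and (M2) for every $a\in C(Q)$.

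The auxiliary operator identities I would collect are the following. From \eqref{RIF1} with $y=a$ (using $P_{a}=L_{a^{2}}$) and from \eqref{RIF2} with $y=a$ one gets
\[
L_{x}L_{a^{2}}R_{x}\ =\ L_{ax}R_{ax}\ =\ R_{x}L_{a^{2}}L_{x},
\]
which at the element level is the symmetry $(a^{2}(xy))x=x(a^{2}(yx))$. Composing \eqref{RIF1} with \eqref{RIF2} yields the general identity $P_{x}P_{y}P_{x}=P_{xyx}$ ($L_{x}R_{x}P_{y}L_{x}R_{x}=L_{x}P_{xy}R_{x}=P_{(xy)x}$), and specializing (with $xa^{2}x=a^{2}x^{2}$) gives $P_{x}P_{a^{2}}P_{x}=P_{a^{2}x^{2}}$. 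Finally, applying \eqref{ARIF} with $y=a^{2}$ yields $L_{x}L_{a^{4}x}=L_{a^{2}x^{2}}L_{a^{2}}$; substituting this for $L_{a^{4}x}$ shows that (M2) is equivalent to the cleaner identity $L_{x}L_{a^{2}}L_{x}=L_{a^{2}x^{2}}$.

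The semiautomorphism hypothesis enters through inner mappings that fix $a$. A direct $\langle a,x\rangle$-computation shows $T_{x}$ fixes $a$ (since $aT_{x}=x^{-1}(ax)=x^{-1}(xa)=a$), and $L_{a,x}=L_{a}L_{x}L_{ax}^{-1}$ fixes both $a$ and $x$. For any semiautomorphism $\theta$ fixing $a$, the identity $(aza)\theta=a\cdot z\theta\cdot a$ combined with $aza=a^{2}z$ gives $L_{a^{2}}\theta=\theta L_{a^{2}}$; when $\theta$ also fixes $x$, $(xzx)\theta=x\cdot z\theta\cdot x$ forces $P_{x}\theta=\theta P_{x}$. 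Thus $T_{x}$ and $L_{a,x}$ both commute with $L_{a^{2}}$, and $L_{a,x}$ moreover commutes with $P_{x}$.

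Assembling the argument: the $P$-level identity $P_{x}P_{a^{2}}P_{x}=P_{a^{2}x^{2}}$ factors, via the $L$-$R$ symmetry together with flexibility ($L_{x}R_{x}=R_{x}L_{x}=P_{x}$), as $(L_{x}L_{a^{2}}L_{x})(R_{x}R_{a^{2}}R_{x})=L_{a^{2}x^{2}}R_{a^{2}x^{2}}$. Unwinding $L_{a,x}=L_{a}L_{x}L_{ax}^{-1}$ inside its commutations with $L_{a^{2}}$ and $P_{x}$ pins the $L$-factor down individually, yielding $L_{x}L_{a^{2}}L_{x}=L_{a^{2}x^{2}}$ and hence (M2). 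Identity (M1) is then handled by a parallel computation using \eqref{RIF1} with $y=a^{2}$ together with $R_{x}L_{a^{2}}=L_{a^{2}}R_{x}$ (a consequence of $T_{x}$-commutation with $L_{a^{2}}$) and diassociativity inside $\langle a,x\rangle$. The main obstacle is this last decoupling step: the $P$-level identity only fixes the product of the $L$- and $R$-factors, so extracting the $L$-factor equality individually is what forces one to exploit the semiautomorphism-induced commutation of $L_{a,x}$ with both $L_{a^{2}}$ and $P_{x}$.
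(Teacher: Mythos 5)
There is a genuine gap, and one of your key identities is false. You derive correctly that $T_x$ is a semiautomorphism fixing $a$ and hence commutes with $P_a=L_{a^2}$, but that only gives $R_xL_x^{-1}L_{a^2}L_x=L_{a^2}R_x$, \emph{not} $R_xL_{a^2}=L_{a^2}R_x$. The latter, holding for all $x$, would say precisely that $a^2$ lies in the left nucleus, which is strictly stronger than being a Moufang element and is false in general: in a nonassociative commutative Moufang loop of exponent $3$ one has $C(Q)=Q$ and $a^2=a^{-1}$, so $a^2\in N(Q)$ for all $a$ would force associativity. This breaks both your factorization of $P_xP_{a^2}P_x=P_{a^2x^2}$ into $(L_xL_{a^2}L_x)(R_xR_{a^2}R_x)$ (which needs $R_x$ and $L_x$ to slide past $L_{a^2}$ and $R_{a^2}$) and your ``parallel computation'' for (M1). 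Moreover, even granting the product identity, the decisive step --- extracting the single-factor equality $L_xL_{a^2}L_x=L_{a^2x^2}$ from an identity that only constrains the product of the $L$- and $R$-parts --- is never carried out: ``unwinding $L_{a,x}$ inside its commutations \dots pins the $L$-factor down individually'' is an assertion, not an argument, and you yourself identify this as the main obstacle. The correct observations you do make (the reduction of (M3) to (M2) via $J$, the identity $P_xP_yP_x=P_{xyx}$, and the commutation of $L_{a,x}$ with $L_{a^2}$ and $P_x$) do not by themselves close this gap.

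For comparison, the paper sidesteps the decoupling problem entirely: by Lemma~\ref{othermoufelemdef}, in a semiautomorphic IP loop it suffices to verify the single operator identity $R_xR_{a^2}R_x=R_{xa^2x}$, and the chain $R_xR_{a^2}R_x = R_xP_aR_x = T_xP_{ax} = T_{ax}P_{ax}=R_{ax}^2=R_{xa^2x}$ closes using \eqref{RIF1}, flexibility, and the nontrivial fact $T_{ax}=T_x$ (Lemma~\ref{L.5}), which in turn rests on the technical Lemmas~\ref{L.1}--\ref{L.4}. Your proposal contains no substitute for that ingredient; to repair it you would need either an honest proof of the decoupling step or something playing the role of $T_{ax}=T_x$.
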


The proof will occupy most of this section and will require some technical lemmas.  We note that in a semiautomorphic IP loop $Q$, each $\theta \in \Inn(Q)$ preserves powers, that is, $x^{n}\theta=(x\theta)^{n}$ for all $x\in Q$, $n\in \mathbb{Z}$.  We will use this without comment in what follows.

\begin{lemma}\cite{KV09}.
In a semiautomorphic IP loop $Q$, $a\in M(Q)$ if and only if $(yx\cdot a)x=y\cdot xax$ for all $x,y\in Q$.
\label{othermoufelemdef}
\end{lemma}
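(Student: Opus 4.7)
The plan is to work in operator form and exploit the ARIF identity together with the $J$-duality available in any semiautomorphic IP loop. Translating the stated identity $(yx\cdot a)x = y\cdot xax$ as an operator equation on right translations, it reads $R_x R_a R_x = R_{xax}$, while the third defining identity $(ya\cdot x)a = y(a\cdot xa)$ of a Moufang element is $R_a R_x R_a = R_{axa}$. The ARIF relation $R_x R_{yxy} = R_{xyx} R_y$ specialized to $y=a$ becomes $R_x R_{axa} = R_{xax} R_a$, and this is the bridge between the two forms.

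For the forward direction, assume $a\in M(Q)$. Then $R_{axa} = R_a R_x R_a$ by the third Moufang identity, so the specialized ARIF relation reads $R_x R_a R_x R_a = R_{xax} R_a$; canceling the bijection $R_a$ yields $R_x R_a R_x = R_{xax}$, as desired.

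For the reverse direction, assume $R_x R_a R_x = R_{xax}$ for all $x\in Q$. Right-multiplying by $R_a$ and comparing with the specialized ARIF relation recovers $R_a R_x R_a = R_{axa}$, the third Moufang identity. To obtain the second Moufang identity $L_a L_x L_a = L_{axa}$, I would apply $J$ to the hypothesis, using $(xax)^{-1} = x^{-1}a^{-1}x^{-1}$ from IP, to produce $L_x L_{a^{-1}} L_x = L_{x a^{-1} x}$; the dual ARIF identity $L_x L_{yxy} = L_{xyx} L_y$ at $y = a^{-1}$ then gives $L_{a^{-1}} L_x L_{a^{-1}} = L_{a^{-1} x a^{-1}}$, and substituting $x\mapsto axa$ together with the diassociative simplification $a^{-1}(axa)a^{-1} = x$ inside the group $\langle a,x\rangle$ yields $L_a L_x L_a = L_{axa}$. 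Finally, the Moufang identity itself follows from these two Bol-type identities: the second gives $a(xy) = (axa)(a^{-1}y)$, and then by flexibility and the third, $a(xy\cdot a) = (a(xy))a = ((ax\cdot a)(a^{-1}y))a = (ax)(a\cdot a^{-1}y\cdot a) = ax\cdot ya$.

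The main obstacle is bookkeeping: keeping track of the interplay between $a$ and $a^{-1}$ and between $L$- and $R$-translations when dualizing via $J$. Once that is managed, each Moufang element identity falls out of the hypothesis by a short ARIF manipulation, and the Moufang identity itself is a brief consequence of the two Bol-type identities via flexibility and diassociativity in $\langle a,x\rangle$.
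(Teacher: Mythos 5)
Your proof is correct. Note first that the paper itself supplies no argument for this lemma: it is imported from \cite{KV09} (which, as the introduction observes, studies \emph{commutative} RIF loops), so there is nothing in the text to compare against step by step. Your derivation is a clean, self-contained alternative built entirely from tools already available in {\S}\ref{sect2}. The operator translations are right: $a\in M(Q)$ packages the identities $L_aL_xL_a=L_{axa}$, $R_aR_xR_a=R_{axa}$ and $a(xy\cdot a)=ax\cdot ya$, while the stated condition is $R_xR_aR_x=R_{xax}$, and the specialization $R_xR_{axa}=R_{xax}R_a$ of \eqref{ARIF} does exactly the bridging work you claim in both directions, with $R_a$ (resp.\ $R_x$) cancelled as a bijection. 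The $J$-conjugation step is also sound: in an IP loop $JR_xJ=L_{x^{-1}}$ and $(xax)^{-1}=x^{-1}a^{-1}x^{-1}$, so the hypothesis dualizes to $L_xL_{a^{-1}}L_x=L_{xa^{-1}x}$, and the left half of \eqref{ARIF} together with the substitution $x\mapsto axa$ and the diassociative cancellation $a^{-1}(axa)a^{-1}=x$ recovers $L_aL_xL_a=L_{axa}$. The final assembly of $a(xy\cdot a)=ax\cdot ya$ from the two Bol-type identities via flexibility, $a(a^{-1}y)=y$, and $a\cdot(a^{-1}y)a=ya$ checks out. The only point worth making explicit in a written version is that \eqref{ARIF} is being invoked for semiautomorphic IP loops via Lemma \ref{originaldef} and the remark following it (citing \cite{KKP}); with that in place your argument is complete, and it has the added virtue of not relying on commutativity, which is the setting of the cited source.
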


\begin{lemma}
Let $Q$ be a dissociative loop and $a\in C(Q)$.  Then $\left\langle a \right\rangle \subseteq C(Q)$. 
\label{C1}
\end{lemma}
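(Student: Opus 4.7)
The plan is to exploit diassociativity to reduce the claim to a standard fact about groups. Specifically, to show $\langle a\rangle\subseteq C(Q)$ it suffices to show that every element of $\langle a\rangle$ commutes with every $x\in Q$. Given such an $x$, I would work inside the two-generated subloop $\langle a,x\rangle$, which by diassociativity is a subgroup of $Q$.

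The key steps, in order, are as follows. First, fix $x\in Q$ and form $H=\langle a,x\rangle$; by diassociativity $H$ is an associative loop, hence a group, and $\langle a\rangle\subseteq H$, so integer powers $a^n$ are unambiguously defined in $Q$ and lie in $H$. Second, note that $a\in C(Q)$ gives $ax=xa$, so $a$ and $x$ commute in the group $H$. Third, in any group, two elements that commute generate an abelian subgroup, hence $a^n x = x a^n$ in $H$ for every $n\in\mathbb Z$, which is the desired identity in $Q$. Finally, since $x$ was arbitrary, $a^n\in C(Q)$, and since every element of $\langle a\rangle$ is such a power, $\langle a\rangle\subseteq C(Q)$.

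There is no real obstacle here; the only subtlety worth flagging is to make sure the powers of $a$ computed in $Q$ agree with those computed in $H$ (which is automatic from diassociativity, since $\langle a\rangle$ is already cyclic), and to note that we never need more than two generators at a time, so diassociativity — not some stronger associativity hypothesis — is exactly what is needed.
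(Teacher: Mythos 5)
Your proof is correct and is essentially the paper's argument in different clothing: the paper's one-line computation $a^n x = xL_{a^n} = xL_a^n = xR_a^n = xR_{a^n} = xa^n$ uses exactly the same two ingredients you do, namely diassociativity (to identify $L_{a^n}$ with $L_a^n$ and $R_{a^n}$ with $R_a^n$, i.e.\ to work inside the group $\langle a,x\rangle$) and the commutant condition $L_a=R_a$. No gaps; your framing via the subgroup $\langle a,x\rangle$ and the paper's framing via translation maps are interchangeable.
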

\begin{proof}
We simply note $a^{n} x = x L_{a^{n}} = x L_{a}^{n} = x R_{a}^{n} = x R_{a^{n}} = x a^n$.
\end{proof}
\begin{lemma}
Let $Q$ be a diassociative loop. For all $a\in C(Q)$, $x\in Q$ and all $n\in \mathbb{Z}$, $(xa)^{n}=x^{n}a^{n}$.
\label{C2}
\end{lemma}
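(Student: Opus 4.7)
The plan is short: reduce to an abelian subgroup and then appeal to a standard group computation. By diassociativity, the subloop $\langle x, a\rangle$ generated by $x$ and $a$ is a group; in particular all products of $x$'s and $a$'s (and their inverses) associate freely, and powers $x^n$, $a^n$, $(xa)^n$ are unambiguously defined for all $n\in\mathbb{Z}$.

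Next I would observe that $\langle x,a\rangle$ is abelian. Since $a\in C(Q)$ we have $ax=xa$ in $Q$, hence in $\langle x,a\rangle$. A group generated by two commuting elements is abelian, because every element has the form $x^i a^j$ and such monomials commute with each other. (If one prefers not to invoke this general fact, the argument is elementary once we are inside a group.)

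With this in hand, the claim $(xa)^n = x^n a^n$ becomes a standard exercise inside an abelian group. For $n\ge 0$ use induction: $n=0$ is trivial, and
\[
(xa)^{n+1} \;=\; (xa)^n\cdot xa \;=\; x^n a^n \cdot xa \;=\; x^{n+1}a^{n+1},
\]
where associativity and commutativity of $x$ and $a$ inside $\langle x,a\rangle$ permit the rearrangement. For $n<0$, either apply the same induction after noting $a^{-1}\in C(Q)$ by Lemma \ref{C1} (so that $(xa)^{-1}=x^{-1}a^{-1}$ falls under the previous case), or simply take inverses of the identity already proved for $-n>0$ inside the group $\langle x,a\rangle$.

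There is essentially no obstacle here: the only thing to verify carefully is that every manipulation takes place inside the associative, commutative subloop $\langle x,a\rangle$, which is legitimate by diassociativity together with $a\in C(Q)$.
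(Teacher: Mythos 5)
Your proof is correct and takes essentially the same route as the paper's, which simply cites Lemma \ref{C1} together with an induction argument: diassociativity makes $\langle x,a\rangle$ a group, the relation $ax=xa$ (equivalently, Lemma \ref{C1} applied inside that subgroup) makes it abelian, and the identity is then the standard abelian-group computation. Nothing is missing.
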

\begin{proof}
This follows easily from Lemma \ref{C1} and an induction argument.
\end{proof}

\begin{lemma}
Let $Q$ be a semiautomorphic IP loop and let $a\in C(Q)$.  For all $x\in Q$
\begin{equation}\tag{\ref{L.1}.1}
P_{a}L_{x}R_{a}L^{-1}_{x}=L_{x}R_{a}L^{-1}_{x}P_{a}.
\label{LA}
\end{equation}
\label{L.1}
\end{lemma}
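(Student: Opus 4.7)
The plan is to exploit a general principle: any semiautomorphism $\theta\in\Inn(Q)$ satisfying $a\theta=a$ commutes with $P_a$, because for every $v\in Q$,
\[
v(P_a\theta)=(ava)\theta=(a\theta)(v\theta)(a\theta)=a\cdot v\theta\cdot a=v(\theta P_a).
\]

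First I would identify two inner mappings that fix $a$, namely $L_{x,a}=L_xL_aL_{xa}^{-1}$ and $L_{a,x}=L_aL_xL_{xa}^{-1}$ (noting $L_{ax}=L_{xa}$ because $ax=xa$). Since $a\in C(Q)$ and $Q$ is diassociative, Lemmas \ref{C1} and \ref{C2} make $\langle x,a\rangle$ an abelian group in which $(xa)^{-1}=x^{-1}a^{-1}$. A short computation entirely inside this abelian group then gives $aL_{x,a}=(xa)^{-1}(xa\cdot a)=a$ and $aL_{a,x}=(xa)^{-1}(xa\cdot a)=a$. By the principle above, $P_a$ commutes with both $L_{x,a}$ and $L_{a,x}$, and therefore with $L_{x,a}L_{a,x}^{-1}$.

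The decisive step is the telescoping
\[
L_{x,a}L_{a,x}^{-1}=(L_xL_aL_{xa}^{-1})(L_{xa}L_x^{-1}L_a^{-1})=L_xL_aL_x^{-1}L_a^{-1},
\]
so $P_a$ commutes with $L_xL_aL_x^{-1}L_a^{-1}$. Multiplying this commutation on the right by $L_a$ and using that $P_a=L_a^2$ commutes with $L_a$, the trailing $L_a^{-1}L_a$ collapses and one obtains $P_aL_xL_aL_x^{-1}=L_xL_aL_x^{-1}P_a$. Substituting $L_a=R_a$ (valid because $a\in C(Q)$) yields \eqref{LA}.

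The only step requiring genuine insight is spotting the pair $(L_{x,a},L_{a,x})$ of inner mappings that both fix $a$ while their formal combination is exactly $L_xL_aL_x^{-1}L_a^{-1}$; once this is noticed, the remainder is a telescoping calculation in $\Mlt(Q)$ together with the trivial fact that powers of $L_a$ commute pairwise.
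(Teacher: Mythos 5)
Your proof is correct and is essentially the paper's own argument: both rest on the observation that $L_xR_aL_x^{-1}R_a^{-1}$ (which, since $L_a=R_a$ for $a\in C(Q)$, is exactly your $L_{x,a}L_{a,x}^{-1}$) is an inner mapping fixing $a$ and hence, being a semiautomorphism, commutes with $P_a$, after which the identity follows by the same rearrangement using $P_aR_a=R_aP_a$. The only cosmetic difference is that you certify membership in $\Inn(Q)$ by factoring the map through the standard generators $L_{x,a}$ and $L_{a,x}$, whereas the paper simply takes $\theta=L_xR_aL_x^{-1}R_a^{-1}\in\Inn(Q)$ directly and checks $a\theta=a$ by diassociativity.
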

\begin{proof}
We have $P_a L_x R_a L_x^{-1} = P_a \theta R_a$ where $\theta = L_x R_a L_x^{-1} R_a^{-1}\in \Inn(Q)$.
Since $\theta$ is a semiautomorphism, $P_a \theta = \theta P_{a\theta}$. We have
$a\theta = a L_x R_a L_x^{-1} R_a^{-1} = a$ by diassociativity, and so
$P_a L_x R_a L_x^{-1} = L_x R_a L_x^{-1} R_a^{-1} P_a R_a = L_x R_a L_x^{-1} P_a$, as claimed.
\end{proof}
\begin{lemma}
Let $Q$ be a semiautomorphic IP loop and let $a\in C(Q)$.  For all $x\in Q$
\begin{equation}\tag{\ref{L.2}.1}
R^{2}_{ax}=R_{a}R^{2}_{x}R_{a} \qquad L^{2}_{ax}=L_{a}L^{2}_{x}L_{a}.
\label{LB}
\end{equation}
\label{L.2}
\end{lemma}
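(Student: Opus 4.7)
The plan has three stages.

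\textit{Stage 1: Reduction to $a^2x^2$.} Since $Q$ is diassociative, any $u\in Q$ satisfies $R_u^2=R_{u^2}$ and $L_u^2=L_{u^2}$ (evaluating inside $\langle y,u\rangle$). By Lemma~\ref{C2}, $(ax)^2=a^2x^2$, so $R_{ax}^2=R_{a^2x^2}$ and $R_x^2=R_{x^2}$, and likewise for $L$. Thus the two claims become
\[
R_{a^2x^2}=R_aR_{x^2}R_a,\qquad L_{a^2x^2}=L_aL_{x^2}L_a.
\]

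\textit{Stage 2: $P$-level identities.} I would compute $P_{ax^2}$ in two ways, using $L_a=R_a$ throughout. Iterating \eqref{RIF1} (or applying it with $y=a$, $x\to x^2$, together with $L_{x^2}=L_x^2$ and $R_{x^2}=R_x^2$) gives $P_{ax^2}=L_x^2R_a^2R_x^2$, while \eqref{RIF2} with $x=a$, $y=x^2$ gives $P_{ax^2}=R_aP_{x^2}R_a=R_aL_x^2R_x^2R_a$. Equating yields
\[
L_x^2R_a^2R_x^2=R_aL_x^2R_x^2R_a.\tag{$\ast$}
\]
A further application of \eqref{RIF2} with $x=a$, $y=ax^2$ gives $P_{a^2x^2}=R_aP_{ax^2}R_a=R_a^2L_x^2R_x^2R_a^2$, so by flexibility
\[
L_{a^2x^2}R_{a^2x^2}=R_a^2L_x^2R_x^2R_a^2.
\]

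\textit{Stage 3: Separating $L$ and $R$.} The work is now to split this product into its two factors. The key input is Lemma~\ref{L.1} applied with $x^2$ in place of $x$, giving that $P_a$ commutes with $L_{x^2}R_aL_{x^2}^{-1}$. To exploit this I would study the inner mapping
\[
\theta:=R_aR_{x^2}R_aR_{a^2x^2}^{-1}.
\]
A direct computation inside the abelian group $\langle a,x\rangle$ (using Lemmas~\ref{C1} and \ref{C2}) shows that $1\theta=1$, so $\theta\in\Inn(Q)$, and further that $\theta$ fixes $a$, $x$ and hence all of $\langle a,x\rangle$ pointwise. Applying the semiautomorphic property to $\theta$, together with Lemma~\ref{L.1} and the commutation $(\ast)$, I expect to force $\theta$ to be the identity on all of $Q$, which is exactly $R_{a^2x^2}=R_aR_{x^2}R_a$. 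The $L$-identity is obtained by the dual argument using the $L$-forms of RIF and of Lemma~\ref{L.1}.

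The main obstacle is Stage~3. The RIF identities are essentially $P$-level statements, and the flexibility identity $P_u=L_uR_u=R_uL_u$ lumps $L_u$ and $R_u$ together; so $L_{a^2x^2}R_{a^2x^2}=R_a^2L_x^2R_x^2R_a^2$ alone cannot separate $L_{a^2x^2}$ from $R_{a^2x^2}$. It is precisely the semiautomorphic commutation supplied by Lemma~\ref{L.1}, beyond what \eqref{RIF1}, \eqref{RIF2} and \eqref{ARIF} give, that is needed to pin down each factor on its own.
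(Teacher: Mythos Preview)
Your Stages~1 and~2 are correct: the reduction to $R_{a^2x^2}=R_aR_{x^2}R_a$ via diassociativity and Lemma~\ref{C2} is fine, and the $P$--level identity $P_{a^2x^2}=R_a^2L_x^2R_x^2R_a^2$ together with $(\ast)$ follows exactly as you say from \eqref{RIF1}, \eqref{RIF2} and $L_a=R_a$.

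The gap is Stage~3. You correctly diagnose the difficulty---the RIF identities are $P$--level and flexibility only lumps $L$ and $R$ together---but you do not actually separate the factors; ``I expect to force $\theta$ to be the identity'' is a hope, not an argument. Concretely, writing $\theta=R_aR_{x^2}R_aR_{a^2x^2}^{-1}=R_{a,x^2}R_{ax^2,a}$ and invoking (\ref{originaldef}.2) to convert each $R_{u,v}$ into $L_{u^{-1},v^{-1}}$ gives $\theta=L_a^{-1}L_x^{-2}L_a^{-1}L_{a^2x^2}$; combining this with $(\ast)$ and your $P_{a^2x^2}$ formula just reproduces the identity $P_{a^2x^2}=L_{a^2x^2}R_{a^2x^2}$, so no new information is gained. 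Likewise, the semiautomorphism property yields only that $\theta$ commutes with every $P_y$ for $y\in\langle a,x\rangle$, and Lemma~\ref{L.1} gives a commutation of $P_a$ with $L_{x^2}R_aL_{x^2}^{-1}$; neither of these, alone or together, pins down $R_{a^2x^2}$ as opposed to $L_{a^2x^2}$. You have not exhibited any relation that breaks the $L\leftrightarrow R$ symmetry.

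The paper's proof takes a quite different route. Rather than trying to split a $P$--identity, it introduces a \emph{third} variable $y$ and exploits the fact that inner mappings preserve squares. The target is the intermediate identity
\[
[(xy)^2]\,L_{x^{-1}}L_aL_{ax}=(a\cdot xy)^2,
\]
obtained by writing the left side as $a^2\cdot[(xy)\theta]^2=[a\cdot(xy)\theta]^2$ for a suitable $\theta\in\Inn(Q)$, then tracking $a\cdot(xy)\theta$ through a chain of \eqref{RIF1}, \eqref{ARIF} and diassociativity manipulations (including a second inner mapping $\varphi$ and another application of power preservation). Two substitutions then collapse this to $x\cdot(ay)^2=a(ax\cdot y^2)$, which is exactly $R_{(ay)^2}=R_aR_{y^2}R_a$. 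The essential idea your sketch is missing is this use of square--preservation of inner mappings on an expression involving a free variable $y$ outside $\langle a,x\rangle$; working only with $a$ and $x$, as you do, does not produce enough leverage.
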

\begin{proof}
Using diassociativity, we have
\begin{equation*}
x
=(x\cdot ay)(ya)^{-1}
=(x\cdot ay)^{2}[(x\cdot ay)^{-1}(ya)^{-1}]\\
=(x\cdot ay)^{2}[ya\cdot x\cdot ay]^{-1},
\end{equation*}
and so
\begin{equation}\tag{\ref{L.2}.2}
x=(x\cdot ay)^{2}R_{(ya\cdot x\cdot ay)}^{-1}.
\label{LB1}
\end{equation}
Our intermediate goal is to prove
\begin{equation}\tag{\ref{L.2}.3}
[(xy)^2] L_{x^{-1}} L_a L_{ax} =  (a\cdot xy)^2
\label{LB2}
\end{equation}
We have
\[
[(xy)^2] L_{x^{-1}} L_a L_{ax} = [(xy)^2] \theta L_{a^2} = a^2 \cdot [(xy)^2]\theta
\]
where $\theta = L_{x^{-1}} L_a L_{ax} L_{a^{-2}} \in \Inn(Q)$. Since inner mappings preserve powers, we then have
\[
[(xy)^2] L_{x^{-1}} L_a L_{ax} = a^2 \cdot [(xy)\theta]^2  = [a\cdot (xy)\theta]^2 \,.
\]
Now
\begin{alignat*}{2}
a\cdot (xy)\theta 
&= (ax\cdot a(x^{-1}\cdot xy)) L_{a^{-2}} L_a 
&&= (ax) \underbracket[0.75pt]{R_{ay} P_{a^{-1}}} L_a \\
&\stackrel{\eqref{RIF1}}{=}  P_y L_{ay}^{-1} L_a 
&&= [\underbracket[0.75pt]{(ay)^{-1}}] R_{(ax)P_y} L_a \\
& = (x\cdot ay)R_{(ay\cdot x\cdot ay)^{-1}} R_{(ax)P_y} L_a 
&&= (x\cdot ay) \varphi \,,
\end{alignat*}
where $\varphi = R_{x P_{ay}}^{-1} R_{(ax)P_y} L_a$ and where the fifth equality follows from diassociativity.  Thus
\[
[(xy)^2] L_{x^{-1}} L_a L_{ax} = [(x\cdot ay)\varphi]^2\,.
\]
Since
\[
x P_{ay} \stackrel{\eqref{RIF1}}{=} (xa)P_y L_a = a\cdot (ax)P_y\,,
\]
we see that $\varphi = R_{a\cdot (ax)P_y}^{-1} R_{(ax)P_y} L_a$ is an inner mapping. So putting our calculations together, we have
\begin{alignat*}{2}
[(xy)^2] L_{x^{-1}} L_a L_{ax} 
&= [(x\cdot ay)^2]\varphi  
&&= \underbracket[0.75pt]{[(x\cdot ay)^2] R_{(ay\cdot x\cdot ay)^{-1}}} R_{(ax)P_y} L_a \\
&\stackrel{\eqref{LB1}}{=}  x R_{(ax)P_y} L_a  
&&= a \underbracket[0.75pt]{R_x P_y L_x} L_a \\
&\stackrel{\eqref{RIF1}}{=}  a P_{xy} L_a 
&&= a\cdot xy \cdot a \cdot xy \\
&= (a\cdot xy)^2\,.
\end{alignat*}
This establishes \eqref{LB2}.  Now in \eqref{LB2}, replace $y$ with $x^{-1} y$ and rearrange to get
\[
(ax)^{-1} \cdot (ay)^2 = a\cdot x^{-1} y^2\,.
\]
Then replace $x$ with $(ax)^{-1}$ and simplify to get
\[
x\cdot (ay)^2 = a (ax\cdot y^2 )\,,
\]
that is, $x R_{(ay)^2} = x R_a R_{y^2} R_a$. This establishes half of the desired result, and the other half follows by a dual argument.
\end{proof}

\begin{lemma}
Let $Q$ be a semiautomorphic IP loop and let $a\in C(Q)$.  For all $x\in Q$
\begin{equation}\tag{\ref{L.3}.1}
R_{x}R^{-1}_{a^{2}x}R_{a}=R_{a}R_{x}R^{-1}_{a^{2}x}.
\label{LC}
\end{equation}
\label{L.3}
\end{lemma}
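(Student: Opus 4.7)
The strategy I would follow is to reduce \eqref{LC} to an intertwining relation obtainable from the ARIF identity \eqref{ARIF} (Lemma~\ref{originaldef}), and then use Lemma~\ref{L.2} to bridge the remaining gap.

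First I would observe that, since $a\in C(Q)$ and $\langle a,x\rangle$ is a group by diassociativity, $axa=a^2x$ and $xax=ax^2$. Instantiating $R_x R_{yxy}=R_{xyx}R_y$ with $y=a$, and then with the roles of $x$ and $a$ swapped (i.e.\ $x\mapsto a$, $y\mapsto x$), gives the companion identities
\[
R_x R_{a^2 x}=R_{ax^2}R_a, \qquad R_a R_{ax^2}=R_{a^2 x}R_x.
\]
Eliminating $R_{ax^2}$ between these produces the key intertwining relation
\[
R_a R_x R_{a^2 x}=R_{a^2 x}R_x R_a. \tag{$\star$}
\]
Taking inverses in $(\star)$ and multiplying appropriately by $R_x$ on each side yields the close cousin $R_x R_a R_{a^2 x}^{-1}=R_{a^2 x}^{-1}R_a R_x$; however, this is not quite \eqref{LC}, since the positions of $R_a$ and $R_{a^2 x}^{-1}$ relative to $R_x$ are swapped.

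To bridge the gap, note that multiplying \eqref{LC} on the right by $R_{a^2 x}$ gives the equivalent form
\[
R_{a^2 x}^{-1} R_a R_{a^2 x}=R_x^{-1} R_a R_x,
\]
i.e.\ $R_{a^2 x}$ and $R_x$ conjugate $R_a$ to the same map. Here I would invoke Lemma~\ref{L.2}: iterating it (first with $x$, then with $ax$ in place of $x$) supplies the ``square'' identity $R_{a^2 x}^{2}=R_a^2 R_x^2 R_a^2$. Combining this square identity with $(\star)$---and, if needed, using Lemma~\ref{L.1} to replace an $L$-conjugation of $P_a$ by the corresponding $R$-conjugation via the identity $L_x^{-1}P_a L_x=R_x^{-1}P_a R_x$---one can manipulate both sides of \eqref{LC} into a common expression built from $R_a$, $R_x$, and $R_{ax^2}$, at which point the two sides coincide.

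The main obstacle is precisely this last manipulation: the intertwining relation $(\star)$ is a three-operator identity in which $R_{a^2 x}$ appears rather than $R_{a^2 x}^{-1}$, while \eqref{LC} demands one commute past $R_{a^2 x}^{-1}$. The square identity from Lemma~\ref{L.2} is what supplies the ``extra factor'' of $R_{a^2 x}$ needed to reconcile the two, and getting the bookkeeping of $R_a$'s to balance correctly is the delicate step in the calculation.
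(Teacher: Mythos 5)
Your opening move is sound and genuinely close in spirit to the paper's: instantiating \eqref{ARIF} as $R_xR_{a^2x}=R_{ax^2}R_a$ and $R_aR_{ax^2}=R_{a^2x}R_x$ and eliminating $R_{ax^2}$ does yield your relation $(\star)$, namely $R_aR_xR_{a^2x}=R_{a^2x}R_xR_a$, and your reformulation of \eqref{LC} as $R_{a^2x}^{-1}R_aR_{a^2x}=R_x^{-1}R_aR_x$ is also correct. The problem is the bridge. Writing $A=R_a$, $X=R_x$, $B=R_{a^2x}$, you are claiming that $B^{-1}AB=X^{-1}AX$ follows from $AXB=BXA$ together with the square identity $B^2=A^2X^2A^2$. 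As a formal deduction this fails: take $A=X$ to be one reflection and $B$ another reflection generating an infinite dihedral group, so $A^2=B^2=1$; then $AXB=B=BXA$ and $B^2=1=A^2X^2A^2$, yet $B^{-1}AB\neq A=X^{-1}AX$. So some further loop-theoretic input beyond $(\star)$ and the square identity is indispensable, and the proposal never identifies it --- the sentence ``one can manipulate both sides of \eqref{LC} into a common expression'' is exactly the content of the lemma, asserted rather than proved, and you yourself flag it as the delicate step. The appeal to Lemma \ref{L.1} does not help as stated: that lemma says $P_a$ commutes with $L_xR_aL_x^{-1}$, which is not the identity $L_x^{-1}P_aL_x=R_x^{-1}P_aR_x$ you quote.

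For comparison, the paper does not pass through an intertwining relation at all. It starts from the element identity $x=ax\cdot(a^2x)^{-1}\cdot ax$, so that $R_xR_{a^2x}^{-1}R_a=R_{ax\cdot(a^2x)^{-1}\cdot ax}R_{(a^2x)^{-1}}R_a$, and then applies \eqref{ARIF} three times --- each time after using $a\in C(Q)$ and diassociativity to rewrite the relevant subscript in the form $uvu$, and inserting $R_{ax}=R_{ax}^2R_{(ax)^{-1}}$ so that Lemma \ref{L.2} can convert $R_{ax}^2$ into $R_aR_{x^2}R_a$ --- until the word collapses to $R_aR_xR_{a^2x}^{-1}$. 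If you want to salvage your route, you would need to supply the analogous additional applications of \eqref{ARIF}; the two relations you isolate are true but not sufficient.
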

\begin{proof}
Since $x = ax\cdot (a^2x)^{-1}\cdot ax$, we have
\[
R_x R_{a^2 x}^{-1} R_a = \underbracket[0.75pt]{R_{ax\cdot (a^2x)^{-1}\cdot ax} R_{(a^2 x)^{-1}}} R_a
\stackrel{\eqref{ARIF}}{=}  R_{ax} R_{(a^2x)^{-1}\cdot ax\cdot (a^2x)^{-1}} R_a \,.
\]
Now $(a^2x)^{-1}\cdot ax\cdot (a^2x)^{-1} = a^{-1}\cdot (ax)^{-1}\cdot a^{-1}$ and $R_{ax} = R_{ax}^2 R_{(ax)^{-1}}$, and so
by the above,
\[
R_x R_{a^2 x}^{-1} R_a =R_{ax}^2 \underbracket[0.75pt]{R_{(ax)^{-1}} R_{a^{-1}\cdot (ax)^{-1}\cdot a^{-1}}} R_a
\stackrel{\eqref{ARIF}}{=}  R_{ax}^2  R_{(ax)^{-1}\cdot a^{-1}\cdot (ax)^{-1}} R_{a^{-1}} R_a \,.
\]
We have $R_{ax}^2 = R_a R_{x^2} R_a$ by Lemma \ref{L.2} and $(ax)^{-1}\cdot a^{-1}\cdot (ax)^{-1} = (a^2x)^{-1}\cdot a\cdot (a^2x)^{-1}$, and so
\begin{alignat*}{2}
R_x R_{a^2 x}^{-1} R_a 
&= R_a R_{x^2} \underbracket[0.75pt]{R_a R_{ (a^2x)^{-1}\cdot a\cdot (a^2x)^{-1}}}  
&&\stackrel{\eqref{ARIF}}{=} R_a R_{x^2} R_{a\cdot (a^2x)^{-1}\cdot a} R_{(a^2x)^{-1}}  \\
&= R_a R_{x^2} R_{x^{-1}} R_{(a^2x)}^{-1} 
&&= R_a R_x R_{(a^2x)}^{-1} \,,
\end{alignat*}
as claimed.
\end{proof}
\begin{lemma}
Let $Q$ be a semiautomorphic IP loop and let $a\in C(Q)$.  For all $x\in Q$
\begin{equation}\tag{\ref{L.4}.1}
(xy)^{-1}\cdot ax=ax\cdot (yx)^{-1}.
\label{LD}
\end{equation}
\label{L.4}
\end{lemma}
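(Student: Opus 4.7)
The plan is to reformulate Lemma~\ref{L.4} as the equality of inner mappings $T_{ax}=T_x$, and then to prove this equality using the lemmas already established.

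First, I would reformulate the identity. Writing $(xy)^{-1}=y^{-1}x^{-1}$ and $(yx)^{-1}=x^{-1}y^{-1}$, and substituting $y\mapsto y^{-1}$ (a bijection of $Q$), the claim becomes $(yx^{-1})(ax)=(ax)(x^{-1}y)$ for all $y\in Q$. Viewing both sides as right-actions on $y$, this is the operator equation $R_{x^{-1}}R_{ax}=L_{x^{-1}}L_{ax}$. Using $R_{x^{-1}}=R_x^{-1}$ and $L_{x^{-1}}=L_x^{-1}$ in an IP loop, this rearranges to $R_{ax}L_{ax}^{-1}=R_xL_x^{-1}$, i.e.\ $T_{ax}=T_x$.

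Next, I would prove $T_{ax}=T_x$ for every $x\in Q$ and every $a\in C(Q)$. Both are inner mappings, hence semiautomorphisms, and diassociativity of $\langle a,x\rangle$ immediately shows that both fix $1,a,x,ax$, and indeed every element of the subloop $\langle a,x\rangle$. To extend the agreement to an arbitrary $w\in Q$, I would apply \eqref{RIF1} and \eqref{RIF2} with $y=a$. Because $a\in C(Q)$ gives $P_a(v)=ava=a^2v$ (by commutativity and diassociativity), the two expressions $P_{ax}=L_xP_aR_x=R_xP_aL_x$ yield, evaluated on $w$,
\[
(ax)(w\cdot ax)=((xw)a^2)\,x = x\bigl((wx)a^2\bigr)
\]
for all $w\in Q$. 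This ``perturbed flexibility'' is the key identity. Invoking Lemma~\ref{L.2} to factor $L_{ax}^{\pm 2}=L_a^{\pm 1}L_x^{\pm 2}L_a^{\pm 1}$ (and its $R$-dual), together with Lemma~\ref{L.3} to commute $R_a$ past $R_xR_{a^2x}^{-1}$, the expression $(ax)^{-1}(w\cdot ax)$ should reduce to $x^{-1}(wx)$, giving $T_{ax}(w)=T_x(w)$.

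The main obstacle will be the non-nuclearity of $a$: although $a\in C(Q)$ commutes with every element, it need not associate with arbitrary pairs, so products involving $a$, $x$, and a third arbitrary element $w\in Q$ cannot be freely rebracketed. All manipulations must either remain inside $\langle a,x\rangle$ (where diassociativity gives full associativity) or be routed through the central square $a^2$, whose interactions with other terms are governed by the RIF identities and by the structural Lemmas~\ref{L.1}--\ref{L.3}. The delicate step will be assembling these ingredients into a single chain of equalities; in particular, verifying that the residual terms arising from non-associativity cancel is precisely the content of L.1 and L.3, which is why those lemmas were set up first.
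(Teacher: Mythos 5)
Your reduction of the identity to the operator equation $T_{ax}=T_x$ is correct, and it is exactly the content of the paper's Lemma \ref{L.5} --- which, however, the paper derives \emph{from} Lemma \ref{L.4}, not the other way around. So you have correctly identified an equivalent target, but the actual proof of that target is missing. The step where you assert that $(ax)^{-1}(w\cdot ax)$ ``should reduce to'' $x^{-1}(wx)$ is the entire technical content of the lemma, and the ingredients you name do not obviously suffice. The ``perturbed flexibility'' identity you derive, $P_{ax}=L_xL_{a^2}R_x=R_xR_{a^2}L_x$, constrains only the product $L_{ax}R_{ax}=P_{ax}$; it gives no handle for separating $R_{ax}$ from $L_{ax}$, which is what $T_{ax}=R_{ax}L_{ax}^{-1}=R_xL_x^{-1}$ requires. (Indeed $P_{yx}=L_xP_yR_x$ holds for \emph{all} $y$ by \eqref{RIF1}, so the only new information you extract from $a\in C(Q)$ at that point is $P_a=L_{a^2}=R_{a^2}$ and $P_{ax}=P_{xa}$, and one should not expect $T_{ax}=T_x$ to fall out of that alone.) Likewise, Lemma \ref{L.2} controls the \emph{squares} $R_{ax}^2$ and $L_{ax}^2$, and you do not explain how that bears on a single application of $R_{ax}L_{ax}^{-1}$. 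The observation that $T_{ax}$ and $T_x$ agree on $\langle a,x\rangle$ is fine but carries almost none of the weight.

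By contrast, the paper's proof is a genuinely delicate chain of equalities: it starts from the diassociative expansion $ax=(xy\cdot(a\cdot(a^{2}y)^{-1}\cdot a))a$, applies Lemma \ref{L.1} to move $P_a$ past $L_{xy}R_aL_{(xy)^{-1}}$, applies Lemma \ref{L.3} to commute $R_a$ past $R_yR_{(a^{2}y)^{-1}}$, and then uses \eqref{RIF1}, \eqref{RIF2} and \eqref{ARIF} in several further steps to collapse the resulting word of translations. None of that computation is present in, or clearly recoverable from, your outline; you yourself flag ``assembling these ingredients into a single chain of equalities'' as the delicate step, but that assembly \emph{is} the proof. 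As written, the proposal is a plan with a correct target and plausible tools, not a proof.
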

\begin{proof}
By diassociativity, $ax = (xy\cdot y^{-1})a = (xy\cdot (a\cdot (a^2 y)^{-1}\cdot a)) a$, and so we have
\begin{alignat*}{3}
(xy)^{-1}\cdot ax
&=(xy)^{-1}\cdot (xy\cdot (a\cdot (a^{2}y)^{-1}\cdot a))a
&&=[(a^{2}y)^{-1}]\underbracket[0.75pt]{P_{a}L_{xy}R_{a}L_{(xy)^{-1}}}\\
&\stackrel{\eqref{LA}}{=}[(a^{2}y)^{-1}]L_{xy}R_{a}L_{(xy)^{-1}}P_{a}
&&=x\underbracket[0.75pt]{R_{y}R_{(a^{2}y)^{-1}}R_{a}}L_{(xy)^{-1}}P_{a}\\
&\stackrel{\eqref{LC}}{=}xR_{a}R_{y}R_{(a^{2}y)^{-1}}L_{(xy)^{-1}}R_{a}.
\end{alignat*}
Now
\begin{alignat*}{3}
(xa\cdot y)(a^2 y)^{-1} 
&= (a^2 y)^{-1} \underbracket[0.75pt]{P_{xa\cdot y}} R_{(xa\cdot y)^{-1}} 
&&\stackrel{\eqref{RIF2}}{=}  \underbracket[0.75pt]{(a^2 y)^{-1} L_y P_{xa}} R_y R_{(xa\cdot y)^{-1}} \\
&= (x^2) R_y  R_{(xa\cdot y)^{-1}} 
&&= (x^2 y) R_{(xa\cdot y)^{-1}}\,,
\end{alignat*}
using diassociativity in the third equality. Combining this with the calculation above, we have
\[
(xy)^{-1} \cdot ax = \underbracket[0.75pt]{(x^2y) R_{(xa\cdot y)^{-1}}} L_{(xy)^{-1}} P_a
= [(xa\cdot y)^{-1}] L_{x^2 y} L_{(xy)^{-1}} P_a \,.
\]
Since $x^2 y = xy\cdot y^{-1}\cdot xy$, we get
\begin{alignat*}{3}
(xy)^{-1} \cdot ax 
&= [(xa\cdot y)^{-1}] L_y \underbracket[0.75pt]{L_{y^{-1}} L_{ xy\cdot y^{-1}\cdot xy }} L_{(xy)^{-1}} P_a\\
&\stackrel{\eqref{ARIF}}{=} [(xa\cdot y)^{-1}] L_y L_{y^{-1}\cdot xy\cdot y^{-1}} \underbracket[0.75pt]{L_{xy} L_{(xy)^{-1}}}P_a
&&= [(xa\cdot y)^{-1}] L_y L_{y^{-1}\cdot xy\cdot y^{-1}} P_a.
\end{alignat*}
Now $[(xa\cdot y)^{-1}] L_y = (xa)^{-1}$ and $y^{-1}\cdot xy\cdot y^{-1} = y^{-1} x$, and so
\begin{alignat*}{3}
(xy)^{-1} \cdot ax 
&= [(xa)^{-1}] L_{y^{-1}x} P_a 
&&= (y^{-1}x) \underbracket[0.75pt]{R_{(xa)^{-1}} P_a L_{(ax)^{-1}}} L_{ax}  \\
&\stackrel{\eqref{RIF1}}{=} \underbracket[0.75pt]{(y^{-1} x) P_{x^{-1}}} L_{ax} 
&&= (x^{-1}y^{-1}) L_{ax} \\
&= ax\cdot (yx)^{-1},
\end{alignat*}
using $(xa)^{-1} \cdot a = x^{-1}$ in the second equality. This completes the proof.
\end{proof}
\begin{lemma}
Let $Q$ be a semiautomorphic IP loop and let $a\in C(Q)$.  For all $x\in Q$
\begin{equation}\tag{\ref{L.5}.1}
T_{ax}=T_{x}
\label{LE}
\end{equation}
\label{L.5}
\end{lemma}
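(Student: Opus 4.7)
The plan is to reduce \eqref{LE} to the equivalent operator identity $R_{ax} = T_x L_{ax}$. Once that is in hand, post-composing with $L_{ax}^{-1}$ gives $T_{ax} = R_{ax} L_{ax}^{-1} = T_x L_{ax} L_{ax}^{-1} = T_x$, which is precisely \eqref{LE}.

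To obtain this operator identity, I would specialise Lemma \ref{L.4} by substituting $y \mapsto x^{-1}y$ in \eqref{LD}. On the left the factor $(xy)^{-1}$ collapses to $y^{-1}$ by the left IP, since $x(x^{-1}y) = y$. On the right the only modified factor is $(yx)^{-1}$, which becomes $((x^{-1}y) \cdot x)^{-1}$; applying the antiautomorphic inverse property of IP loops together with $(x^{-1}y)^{-1} = y^{-1}x$, this equals $x^{-1}(y^{-1} x)$, and by the very definition $T_x = R_x L_x^{-1}$ this is exactly $y^{-1} T_x$. Thus the substituted form of \eqref{LD} reads
\[
y^{-1}(ax) = (ax) \cdot (y^{-1} T_x)
\]
for all $y \in Q$. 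Writing $z = y^{-1}$ (which ranges over all of $Q$), this says $z R_{ax} = z T_x L_{ax}$ for every $z$, i.e., exactly $R_{ax} = T_x L_{ax}$.

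The only subtlety to watch is the handling of $((x^{-1}y)x)^{-1}$: one must resist rewriting $(x^{-1}y)x$ as $x^{-1}(yx)$, which fails in a general IP loop, and instead pass through AAIP. After that the argument is essentially a one-line specialisation of Lemma \ref{L.4}, and none of the earlier technical lemmas (\ref{L.1}, \ref{L.2}, \ref{L.3}) appear to be needed for this step.
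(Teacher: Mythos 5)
Your proof is correct and is essentially the paper's own argument: both treat the identity \eqref{LD} of Lemma \ref{L.4} as an operator identity among translations and convert it, by a one-line IP/AAIP manipulation, into \eqref{LE}. The paper inverts both sides of \eqref{LD} to obtain $L_xL_{ax}^{-1}=R_xR_{ax}^{-1}$ and rearranges, whereas you substitute $y\mapsto x^{-1}y$ to obtain $R_{ax}=T_xL_{ax}$; these are trivial variants of the same step, and like the paper you correctly use none of Lemmas \ref{L.1}--\ref{L.3} here.
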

\begin{proof}
Invert both sides of \eqref{LD} to get
\begin{equation*}
(ax)^{-1}\cdot xy=yx\cdot (ax)^{-1}
\end{equation*}
which is
\begin{equation*}
L_{x}L^{-1}_{ax}=R_{x}R^{-1}_{ax}.
\end{equation*}
Rearranging gives
\begin{equation*}
L^{-1}_{ax}R_{ax}=L^{-1}_{x}R_{x},
\end{equation*}
which establishes the claim.
\end{proof}
\noindent We are now ready to prove the two main results of this section.
\begin{proof}[Proof of Theorem \ref{MoufangElement}] 
Let $a\in C(Q)$.  Then for all $x\in C(Q)$,
\begin{alignat*}{4}
R_x R_{a^2} R_x 
&= R_x \underbracket[0.75pt]{P_a R_x}
&&\stackrel{\eqref{RIF1}}{=} R_x L_x^{-1} P_{ax}
&&&&= \underbracket[0.75pt]{T_x}  P_{ax}\\
&= T_{ax} P_{ax}
&&= R_{ax} L_{ax}^{-1} R_{ax} L_{ax}
&&&&= R_{ax} L_{ax}^{-1} L_{ax} R_{ax}\\
&= R_{ax}^2
&&= R_{(ax)^2}
&&&&= R_{x a^2 x},
\end{alignat*}
where the fourth equality follows from Lemma \ref{L.5}.  Hence, $(yx\cdot a^2)x=yR_{x}R_{a^{2}}R_{x}=yR_{xa^{2}x}= y (x\cdot  a^2\cdot x)$.  By Lemma \ref{othermoufelemdef}, we have the desired result.
\end{proof}
\begin{proof}[Proof of Theorem \ref{center}]
Let $a,b\in C(Q)$.  Then, for all $x,y\in Q$,
\begin{alignat*}{4}
ab\cdot x\cdot ab 
&\stackrel{\eqref{RIF2}}{=} a(b\cdot xa\cdot b)
&&= (xa\cdot b)b\cdot a  
&&&= \underbracket[0.75pt]{(xa\cdot b^2)\cdot a} \\
&= x\cdot ab^2 a
&&= x\cdot (ab)^2 
&&&= (x\cdot ab)\cdot ab,
\end{alignat*}
where the fourth equality follows from the fact that $b^{2}$ is a Moufang element, Theorem \ref{MoufangElement}.  Hence, cancelling $ab$ on the right gives $ab\cdot x=x\cdot ab$.
\end{proof}

\begin{lemma}\cite{bruck}.  Let $Q$ be an IP loop.  Then for every $x\in M(Q)\cap C(Q)$, $x^{3} \in Z(Q)$.
\label{MelemCenter}
\end{lemma}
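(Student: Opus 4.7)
My goal is to show $x^3 \in Z(Q) = C(Q) \cap N(Q)$. First, by substituting $1$ for the appropriate generic variables in the three identities defining $M(Q)$ at $a = x$, I extract flexibility of $x$ together with the one-sided power identities $L_{x^2} = L_x^2$, $R_{x^2} = R_x^2$, $L_{x^3} = L_x^3$, and $R_{x^3} = R_x^3$. Combined with $L_x = R_x$ (i.e., $x \in C(Q)$), these identities let me move single occurrences of $x$, $x^2$, or $x^3$ freely across parenthesizations on the left or the right.

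The crux is the identity
\[
xy \cdot xz \;=\; x^2 (yz).
\]
To establish it, apply the first defining identity of $M(Q)$ in the form $x(uv \cdot x) = xu \cdot vx$ with $u = y$, $v = z$, and use $x \in C(Q)$ twice: $xy \cdot zx = x(yz \cdot x) = x(x \cdot yz) = x^2(yz)$, together with $zx = xz$ on the left. Now substitute $y \mapsto xy$ in the second defining identity $x(y \cdot xz) = (xy \cdot x)z$. The left side becomes $x(xy \cdot xz) = x \cdot x^2(yz) = x^3(yz)$; the right side, after rewriting $x \cdot xy = x^2 y$ and using flexibility, becomes $(x^2 y \cdot x)z$. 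Specializing $z = 1$ gives $(x^2 y) x = x^3 y$, while the independent computation $y x^3 = ((yx)x)x = ((xy)x)x = (x^2 y) x$ (via $R_{x^3} = R_x^3$, commutativity, flexibility, and $L_{x^2} = L_x^2$) yields $y x^3 = x^3 y$. Hence $x^3 \in C(Q)$. Feeding this back into the displayed equation gives $x^3(yz) = (x^3 y)z$, so $x^3 \in N_\ell(Q)$, and since the three nuclei of an IP loop coincide, $x^3 \in N(Q)$. Therefore $x^3 \in C(Q) \cap N(Q) = Z(Q)$.

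The main obstacle will be the key identity $xy \cdot xz = x^2(yz)$. Once it is in hand, a single substitution in the second defining identity of $M(Q)$ simultaneously yields both centrality of $x^3$ and its membership in $N_\ell(Q)$, so the bulk of the cleverness is concentrated in this one step. Establishing the key identity requires both the first defining identity of $M(Q)$ and $L_x = R_x$, which is precisely why the joint hypothesis $x \in M(Q) \cap C(Q)$ is unavoidable.
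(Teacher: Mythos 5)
Your proof is correct. The paper offers no argument for this lemma---it is quoted directly from \cite{bruck}---so there is no in-paper proof to compare against; your derivation is a valid self-contained substitute and follows the classical Bruck-style line: combine the Moufang identities at $x$ with $L_x=R_x$ to get the key identity $xy\cdot xz=x^2(yz)$, substitute into the second Moufang identity to obtain $x^3(yz)=(x^2y\cdot x)z$, and read off both $x^3\in C(Q)$ (via $z=1$ and the independent computation of $yx^3$) and $x^3\in N_\lambda(Q)$. The checks you leave implicit are routine and do go through: the extraction of flexibility at $x$ and of $L_{x^2}=L_x^2$, $R_{x^2}=R_x^2$, $L_{x^3}=L_x^3$, $R_{x^3}=R_x^3$ from the three defining identities of $M(Q)$ by specializing variables to $1$ (which also makes $x^3$ unambiguous, since $x(xx)=(xx)x$); and the standard fact that in an IP loop the left, middle, and right nuclei coincide, so $N_\lambda(Q)=N(Q)$. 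With those supplied, the argument is complete and correctly lands in $Z(Q)=C(Q)\cap N(Q)$ as the paper defines it.
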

\noindent
Thus, we have the following,
\begin{corollary}
Let $Q$ be a semiautomorphic IP loop.  If $a\in C(Q)$, then $a^{6} \in Z(Q)$.
\label{SAIPcenter}
\end{corollary}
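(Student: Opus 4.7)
The proof is essentially a direct combination of the three results immediately preceding the corollary, so the plan is short.

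First, I would observe that since $Q$ is a semiautomorphic IP loop, $Q$ is diassociative, and hence by Lemma \ref{C1} the commutant $C(Q)$ is closed under taking powers. In particular, if $a \in C(Q)$, then $a^2 \in C(Q)$.

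Next, I would invoke Theorem \ref{MoufangElement} to conclude that $a^2 \in M(Q)$. Combining these two observations gives $a^2 \in M(Q) \cap C(Q)$.

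Finally, I would apply Lemma \ref{MelemCenter} to the element $a^2$, which yields $(a^2)^3 \in Z(Q)$. Since diassociativity lets us identify $(a^2)^3$ with $a^6$, this gives $a^6 \in Z(Q)$, as required. There is no real obstacle here: the substantive work has already been carried out in the proof of Theorem \ref{MoufangElement}, and the corollary is simply the packaging step combining that theorem with the known fact about $M(Q) \cap C(Q)$ in IP loops.
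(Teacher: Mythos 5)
Your proposal is correct and follows exactly the paper's route: the paper's own proof simply cites Theorem \ref{MoufangElement} and Lemma \ref{MelemCenter}, and your explicit intermediate step (that $a^2\in C(Q)$ via Lemma \ref{C1}, so that $a^2\in M(Q)\cap C(Q)$) is precisely the detail the paper leaves implicit.
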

\begin{proof}
This immediately follows from Theorem \ref{MoufangElement} and Lemma \ref{MelemCenter}.
\end{proof}
An element $a$ of a loop $Q$ is a \emph{C-element} if it satisfies the following equation for all $x,y\in Q$.
\begin{equation}\tag{$C_{0}$}
x(a\cdot ay)=(xa\cdot a)y
\end{equation}
We denote $C_{0}(Q)$ be the set of all \emph{c-elements} in a loop $Q$.
\begin{lemma}\cite{chein3}.  In an IP loop $Q$, $a\in C_{0}(Q)$ if and only if $a^{2}\in N(Q)$.
\label{celement}
\end{lemma}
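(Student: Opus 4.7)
The plan is to prove both implications directly by manipulating the defining identity of $C_{0}$, extracting the left and right alternative laws for $a$ as intermediate stepping stones.

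For the implication $a^{2}\in N(Q)\Rightarrow a\in C_{0}(Q)$, the first step is to derive the alternative laws $a\cdot ay = a^{2}y$ and $xa\cdot a = xa^{2}$ from $a^{2}\in N(Q)$ together with the inverse property. For the left alternative law, $L_{a^{-1}}$ is a bijection of $Q$, and $L_{a^{-1}}(a\cdot ay)=ay$ by IP while $L_{a^{-1}}(a^{2}y)=(a^{-1}\cdot a^{2})y=ay$ by nuclearity, so the two sides agree. The right alternative law follows dually using $R_{a^{-1}}$. Substituting these into the $C_{0}$ identity reduces it to $x(a^{2}y)=(xa^{2})y$, which holds by hypothesis since $a^{2}$ is (middle-)nuclear.

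For the implication $a\in C_{0}(Q)\Rightarrow a^{2}\in N(Q)$, the plan is to extract the alternative laws by specialization of $x(a\cdot ay)=(xa\cdot a)y$. Setting $y=1$ gives $x\cdot a^{2}=xa\cdot a$, i.e., $R_{a^{2}}=R_{a}^{2}$. Setting $x=1$ gives $a\cdot ay=a^{2}y$, i.e., $L_{a^{2}}=L_{a}^{2}$ (using the convention that $1\cdot a\cdot a$ parses as $(1\cdot a)\cdot a = a^{2}$). Feeding both alternative laws back into $C_{0}$ collapses it to $x(a^{2}y)=(xa^{2})y$ for all $x,y$, which is precisely the statement $a^{2}\in N_{m}(Q)$, the middle nucleus.

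The main obstacle is the final step of the forward direction: promoting $a^{2}\in N_{m}(Q)$ to $a^{2}\in N(Q)=N_{\ell}(Q)\cap N_{m}(Q)\cap N_{r}(Q)$ as defined earlier. I would invoke the standard fact that in any IP loop the three one-sided nuclei coincide. If one does not wish to cite this, the proof is routine: taking inverses of the identity $x(a^{2}y)=(xa^{2})y$ and using that inversion is an antiautomorphism in an IP loop (so $(uv)^{-1}=v^{-1}u^{-1}$) yields $(y^{-1}a^{-2})x^{-1}=y^{-1}(a^{-2}x^{-1})$, i.e., $a^{-2}\in N_{r}(Q)$. Since each one-sided nucleus of an IP loop is closed under inversion (by the same antiautomorphism argument), $a^{2}\in N_{r}(Q)$; a symmetric argument gives $a^{2}\in N_{\ell}(Q)$. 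Combined with the middle-nuclear membership already established, this yields $a^{2}\in N(Q)$.
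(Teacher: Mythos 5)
The paper offers no proof of this lemma (it is quoted from Chein's work), so your argument has to stand on its own. Most of it does: specializing the $C_{0}$-identity at $y=1$ and $x=1$ correctly yields $xa\cdot a=xa^{2}$ and $a\cdot ay=a^{2}y$, and substituting back gives $x\cdot a^{2}y=xa^{2}\cdot y$ for all $x,y$, i.e.\ $a^{2}$ lies in the middle nucleus $N_{\mu}(Q)=\{n\mid x\cdot ny=xn\cdot y\ \ \forall x,y\}$. Your converse direction is also sound; note that every step there, including $(xa^{2})a^{-1}=x(a^{2}a^{-1})$, is already an instance of middle-nuclearity of $a^{2}$ combined with IP, so that direction needs nothing beyond $a^{2}\in N_{\mu}(Q)$.

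The genuine gap is in your fallback justification that $a^{2}\in N_{\mu}(Q)$ forces $a^{2}\in N(Q)$. Inverting $x(a^{2}y)=(xa^{2})y$ elementwise does give $(y^{-1}a^{-2})x^{-1}=y^{-1}(a^{-2}x^{-1})$, but this is again a \emph{middle}-nucleus identity --- the element $a^{-2}$ still sits between the two free variables --- so it only shows $a^{-2}\in N_{\mu}(Q)$ and says nothing about the right nucleus $N_{\rho}(Q)=\{n\mid x\cdot yn=xy\cdot n\ \ \forall x,y\}$ as you claim. The fact you want (the three nuclei of an IP loop coincide) is true and citable, but its proof must pass through the translation maps rather than elementwise inversion: $a^{2}\in N_{\mu}(Q)$ is equivalent to $R_{a^{2}y}=R_{a^{2}}R_{y}$ for all $y$; taking inverse permutations and using $R_{z}^{-1}=R_{z^{-1}}$ together with $(uv)^{-1}=v^{-1}u^{-1}$ turns this into $R_{za^{-2}}=R_{z}R_{a^{-2}}$ for all $z$, which is precisely $a^{-2}\in N_{\rho}(Q)$; the dual computation with left translations gives $a^{-2}\in N_{\lambda}(Q)$, and together with $a^{-2}\in N_{\mu}(Q)$ this places $a^{-2}$, hence also $a^{2}=(a^{-2})^{-1}$, in $N(Q)$ (either because $N(Q)$ is a subgroup, as the paper notes, or by running the same translation argument once more starting from $a^{-2}\in N_{\mu}(Q)$). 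With that repair, or with an explicit citation for the coincidence of nuclei in IP loops, your proof is complete.
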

\noindent Hence we have the following,
\begin{corollary}
Let $Q$ be a semiautomorphic IP loop.  If $a\in C(Q)$, then $a^{3}$ is a $c$-element.
\label{SAIPcelement}
\end{corollary}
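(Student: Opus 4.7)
The plan is to derive this immediately from the two preceding results by means of the characterization of $c$-elements in Lemma~\ref{celement}. By that lemma, applied in the IP loop $Q$, the element $a^{3}$ is a $c$-element precisely when $(a^{3})^{2} \in N(Q)$. Since $Q$ is diassociative, $(a^{3})^{2} = a^{6}$, so the task reduces to verifying $a^{6} \in N(Q)$.

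This is exactly the content of Corollary~\ref{SAIPcenter}: assuming $a \in C(Q)$, we have $a^{6} \in Z(Q) = C(Q) \cap N(Q)$, and in particular $a^{6} \in N(Q)$. Combining the two observations, Lemma~\ref{celement} then yields $a^{3} \in C_{0}(Q)$, which is the assertion.

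There is no technical obstacle here; the work has been done in Theorem~\ref{MoufangElement} (which fed into Corollary~\ref{SAIPcenter}) and in the characterization Lemma~\ref{celement} borrowed from \cite{chein3}. The proof is essentially a two-line citation chain, and the only thing to state explicitly is the identification $(a^{3})^{2} = a^{6}$ coming from diassociativity.
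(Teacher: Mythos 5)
Your proof is correct and follows exactly the paper's route: the paper likewise deduces the corollary immediately from the fact that $a^{6}\in Z(Q)$ together with the characterization that an element is a $c$-element if and only if its square lies in the nucleus. The only thing you add is the explicit (and harmless) remark that $(a^{3})^{2}=a^{6}$ by diassociativity, which the paper leaves implicit.
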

\begin{proof}
This follows immediately from Theorem \ref{SAIPcenter} and Lemma \ref{celement}.
\end{proof}
Recall that semiautomorphic IP loops are generalized by flexible loops satisfying \eqref{ARIF} \cite{KKP}.  The following shows we cannot generalize Theorems \ref{T1} and \ref{center} to such loops.
\begin{example}
Let $(Q,\cdot)$ be a loop with multiplication given by Table \ref{multtable}.  Then $Q$ is a flexible, nonsemiautomorphic, IP, C-loop of order $20$ were $R_{x,y}$ and $L_{x,y}$ are not semiautomorphisms and the commutant is not a subloop, found by \textsc{Mace4} \cite{PM}.  
\begin{table}[ht]
\footnotesize
\begin{tabular}{c|cccccccccccccccccccc} 
$\cdot$& 1 & 2 & 3 & 4 & 5 & 6 & 7 & 8 & 9 & 10 & 11 & 12 & 13 & 14 & 15 & 16 & 17 & 18 & 19 & 20 \\ \hline
1& 1 & 2 & 3 & 4 & 5 & 6 & 7 & 8 & 9 & 10 & 11 & 12 & 13 & 14 & 15 & 16 & 17 & 18 & 19 & 20 \\ 
2&2 & 4 & 1 & 3 & 7 & 8 & 6 & 5 & 10 & 12 & 9 & 11 & 19 & 20 & 17 & 18 & 16 & 15 & 14 & 13 \\ 
3&3 & 1 & 4 & 2 & 8 & 7 & 5 & 6 & 11 & 9 & 12 & 10 & 20 & 19 & 18 & 17 & 15 & 16 & 13 & 14 \\ 
4&4 & 3 & 2 & 1 & 6 & 5 & 8 & 7 & 12 & 11 & 10 & 9 & 14 & 13 & 16 & 15 & 18 & 17 & 20 & 19 \\ 
5&5 & 7 & 8 & 6 & 4 & 1 & 3 & 2 & 13 & 17 & 18 & 14 & 12 & 9 & 19 & 20 & 11 & 10 & 16 & 15 \\ 
6&6 & 8 & 7 & 5 & 1 & 4 & 2 & 3 & 14 & 18 & 17 & 13 & 9 & 12 & 20 & 19 & 10 & 11 & 15 & 16 \\ 
7&7 & 6 & 5 & 8 & 3 & 2 & 1 & 4 & 15 & 19 & 20 & 16 & 17 & 18 & 9 & 12 & 13 & 14 & 10 & 11 \\ 
8&8 & 5 & 6 & 7 & 2 & 3 & 4 & 1 & 16 & 20 & 19 & 15 & 18 & 17 & 12 & 9 & 14 & 13 & 11 & 10 \\ 
9&9 & 10 & 11 & 12 & 13 & 14 & 16 & 15 & 1 & 2 & 3 & 4 & 5 & 6 & 8 & 7 & 19 & 20 & 17 & 18 \\ 
10&10 & 12 & 9 & 11 & 17 & 18 & 20 & 19 & 2 & 4 & 1 & 3 & 16 & 15 & 13 & 14 & 6 & 5 & 7 & 8 \\ 
11&11 & 9 & 12 & 10 & 18 & 17 & 19 & 20 & 3 & 1 & 4 & 2 & 15 & 16 & 14 & 13 & 5 & 6 & 8 & 7 \\ 
12&12 & 11 & 10 & 9 & 14 & 13 & 15 & 16 & 4 & 3 & 2 & 1 & 6 & 5 & 7 & 8 & 20 & 19 & 18 & 17 \\ 
13&13 & 19 & 20 & 14 & 12 & 9 & 18 & 17 & 5 & 15 & 16 & 6 & 4 & 1 & 11 & 10 & 7 & 8 & 3 & 2 \\ 
14&14 & 20 & 19 & 13 & 9 & 12 & 17 & 18 & 6 & 16 & 15 & 5 & 1 & 4 & 10 & 11 & 8 & 7 & 2 & 3 \\ 
15&15 & 17 & 18 & 16 & 19 & 20 & 12 & 9 & 7 & 14 & 13 & 8 & 10 & 11 & 4 & 1 & 3 & 2 & 6 & 5 \\ 
16&16 & 18 & 17 & 15 & 20 & 19 & 9 & 12 & 8 & 13 & 14 & 7 & 11 & 10 & 1 & 4 & 2 & 3 & 5 & 6 \\ 
17&17 & 16 & 15 & 18 & 11 & 10 & 14 & 13 & 19 & 6 & 5 & 20 & 8 & 7 & 3 & 2 & 1 & 4 & 9 & 12 \\ 
18&18 & 15 & 16 & 17 & 10 & 11 & 13 & 14 & 20 & 5 & 6 & 19 & 7 & 8 & 2 & 3 & 4 & 1 & 12 & 9 \\ 
19&19 & 14 & 13 & 20 & 16 & 15 & 11 & 10 & 17 & 8 & 7 & 18 & 3 & 2 & 6 & 5 & 9 & 12 & 1 & 4 \\ 
20&20 & 13 & 14 & 19 & 15 & 16 & 10 & 11 & 18 & 7 & 8 & 17 & 2 & 3 & 5 & 6 & 12 & 9 & 4 & 1 \\ 
 \end{tabular}
\caption{Multiplication Table for $(Q,\cdot)$}
\label{multtable}
\end{table}
\end{example}
\normalsize

\section{Constructing semiautomorphic IP loops}
\label{sect4}
We now give two constructions of semiautomorphic IP loops.  We follow the notation given in \cite{chein1, chein2, KPV}.  To show that $Q$ is a semiautomorphic IP loop, by Theorem \ref{samedef}, it is enough to show $Q$ is an IP loop and satisfies either \eqref{RIF1} or \eqref{RIF2}.  The following will be used without comment.
\begin{lemma}\cite{KPV}.
Let $Q$ be an IP loop and $*:Q\rightarrow Q$ a bijection such that $gg^{*} \in Z(Q)$ for every $g \in Q$.  Then $g^{*}g=gg^{*} \in Z(Q)$ for every $g \in Q$.
\label{L1}
\end{lemma}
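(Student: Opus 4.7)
The plan is a short calculation that uses only the inverse property and the fact that $gg^*$ commutes with $g^{-1}$, which is a consequence of $gg^* \in Z(Q) \subseteq C(Q)$. Fix $g \in Q$. First I would use commutativity of $gg^*$ with $g^{-1}$ to write $(gg^*)g^{-1} = g^{-1}(gg^*)$, and then apply the IP identity $g^{-1}(gy)=y$ with $y = g^*$ to collapse the right-hand side to $g^*$. This yields the key intermediate equality $(gg^*)g^{-1} = g^*$.

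Next I would right-multiply this equation by $g$ to obtain $((gg^*)g^{-1})g = g^* g$. The dual IP identity $(yx)x^{-1}=y$, applied with $y = gg^*$ and $x = g^{-1}$ (and using $(g^{-1})^{-1}=g$, which holds in any IP loop), reduces the left-hand side back to $gg^*$. Hence $g^* g = gg^*$, and since $gg^* \in Z(Q)$ by hypothesis, so is $g^*g$.

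There is no real obstacle here, but one should resist the temptation to reassociate $(gg^*)g^{-1}\cdot g$ by appealing to associativity or flexibility, which IP loops need not satisfy a priori. The specific combination $(yg^{-1})g = y$ is exactly what IP supplies, so the argument goes through without needing flexibility, diassociativity, or even the nucleus portion of the $Z(Q)$ hypothesis; only $gg^* \in C(Q)$ is actually used.
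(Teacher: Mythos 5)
Your argument is correct and is essentially the paper's own proof read in the opposite order: the paper computes directly $g^*g=(g^{-1}\cdot gg^*)g=(gg^*\cdot g^{-1})g=gg^*$, using exactly the same three ingredients you identify (the IP identity $g^{-1}(gy)=y$, commutation of $gg^*$ with $g^{-1}$, and the dual IP identity with $(g^{-1})^{-1}=g$). Your closing observation that only $gg^*\in C(Q)$ is actually needed is accurate but does not change the substance.
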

\begin{proof} Since $Q$ in an IP loop,
\begin{equation*}
g^{*}g=(g^{-1} \cdot gg^{*})g=(gg^{*} \cdot g^{-1})g=gg^{*}. \qedhere
\end{equation*}
\end{proof}

Our first construction for semiautomorphic IP loops is based on Chein's $M(G,*,g_0)$ Moufang loop from nonabelian groups \cite{chein1, chein2}.  We begin with the following lemma.

\begin{lemma}
Let $Q$ be a semiautomorphic IP loop and let $*$ be a semiautomorphism of $Q$ such that
\begin{align}
\tag{\ref{semiL1}.1} (g^{*})^{*}&=g, \label{sl.1} \\
\tag{\ref{semiL1}.2}  g^{*}h \cdot (k \cdot g^{*}h)^{*} &= (g \cdot h^{*}k)^{*}g \cdot h^{*}. \label{sl.2}
\end{align}
Then for all $g,h\in Q$,
\begin{align}
\tag{\ref{semiL1}.3} g(hg)^{*} &= (g^{*}h)^{*}g^{*}, \label{sl.2.i} \\
\tag{\ref{semiL1}.4} ((gh)^* g)^* &= (g^* h^*)^* g^*, \label{sl.2.ii} \\
\tag{\ref{semiL1}.5} (g(hg)^{*})^{*} &= g^{*}(h^{*}g^{*})^{*}. \label{sl.2.iii}
\end{align}
\label{semiL1}
\end{lemma}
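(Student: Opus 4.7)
The plan is to dispose of (sl.2.i) by direct substitution, to link (sl.2.ii) and (sl.2.iii) via an easy equivalence, and then to reduce them both to a single identity that follows from (sl.2).

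For (sl.2.i), I would specialize (sl.2) at $h=1$. Since $1^{*}=1$ and $(g^{*})^{*}=g$, this collapses to $g^{*}(kg^{*})^{*}=(gk)^{*}g$, and the admissible substitution $g\to g^{*}$, $k\to h$ (legitimate by (sl.1)) yields (sl.2.i).

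Next I would observe that (sl.2.ii) and (sl.2.iii) are equivalent: substituting $h\to h^{*}$ in (sl.2.iii), applying $*$ to both sides, and invoking (sl.2.i) once on each side (with $h\to h^{*}$ on the left and directly on the right) rearranges the relation into (sl.2.ii). So it suffices to prove (sl.2.iii). Applying $*$ to (sl.2.i) identifies the left-hand side of (sl.2.iii) with $((g^{*}h)^{*}g^{*})^{*}$, and (sl.2.i) evaluated at $(g,h)\mapsto(g^{*},h^{*})$ identifies the right-hand side of (sl.2.iii) with $(gh^{*})^{*}g$. Hence (sl.2.iii) reduces to the single identity
\[ ((g^{*}h)^{*}g^{*})^{*} = (gh^{*})^{*}g, \]
which I shall call $(\star)$.

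To prove $(\star)$ I would use (sl.2) itself via a carefully chosen substitution for $k$ that generates $(xyx)$-patterns (for example $k=g^{*}$, producing the sub-expression $(g^{*}hg^{*})^{*}=gh^{*}g$ by the semiautomorphism property $(xyx)^{*}=x^{*}y^{*}x^{*}$, together with $g^{*}\cdot g^{*}h=(g^{*})^{2}h$ by diassociativity in $\langle g^{*},h\rangle$). Unpacking both sides of the resulting equation using the product-star formula $(AB)^{*}=(A^{*}(BA^{*})^{*})A^{-1}$, itself an immediate consequence of (sl.2.i) plus the inverse property, together with (sl.2.i) applied repeatedly and diassociativity of the 2-generator subloops that arise (notably $\langle g^{*}h,(g^{*}h)^{*}\rangle$), should collapse the equation to $(\star)$ after appropriate right-cancellation.

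The hard part will be this last unpacking: non-associativity of the loop forces careful attention to parenthesization, and the simplification must orchestrate (sl.2.i), the semiautomorphism identity on $(xyx)$-patterns, the product-star formula, and diassociativity of 2-generator subloops in the right order; a naive symmetric substitution like $k=1$ only recovers (sl.2) itself by tautology, so the productive choice of $k$ must break that symmetry. Once $(\star)$ is established, (sl.2.iii) follows immediately, and (sl.2.ii) then follows from the equivalence noted above.
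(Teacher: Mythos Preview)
Your treatment of (sl.2.i) is fine (the paper sets $g=1$ rather than $h=1$, but either specialization works), and your observation that (sl.2.ii) and (sl.2.iii) are equivalent via (sl.2.i) is correct. However, there is a genuine gap in the remaining step.

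First, note that your identity $(\star)$ is nothing new: substituting $g\mapsto g^{*}$ in $(\star)$ gives exactly (sl.2.ii). So after your reductions you are right back to having to prove (sl.2.ii), and the ``reduction'' via (sl.2.i) has not bought anything beyond the equivalence you already recorded. Second, your proposed attack on $(\star)$ does not get off the ground. With $k=g^{*}$ the subexpressions appearing in (sl.2) are $(k\cdot g^{*}h)=g^{*}\cdot g^{*}h=(g^{*})^{2}h$ on the left and $g\cdot h^{*}k=g\cdot h^{*}g^{*}$ on the right; neither is an $(xyx)$-pattern, so the semiautomorphism identity $(xyx)^{*}=x^{*}y^{*}x^{*}$ does not apply as you claim, and there is no evident cancellation leading to $(\star)$. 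The ``product-star formula'' $(AB)^{*}=\bigl(A^{*}(BA^{*})^{*}\bigr)A^{-1}$ is a valid rewriting of (sl.2.i), but it introduces inverses and further nesting rather than simplifying, and you give no indication of how repeated use would terminate.

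The paper's route to (sl.2.ii) is different in kind. From (sl.2.i) one first extracts the auxiliary identity $h=(gh^{-1})^{*}\bigl(h^{*}(g^{-1}h^{*})^{*}\bigr)$ (obtained by writing $g^{*}=((g^{*}h)^{-1})^{*}\cdot\bigl((g^{*}h)^{*}g^{*}\bigr)$, applying (sl.2.i), and then relabeling). One then expands $(hg)^{*}=(h(gh^{-1}\cdot h))^{*}=(h^{*}\cdot(gh^{-1})^{*})\cdot h^{*}$ via the semiautomorphism property, substitutes the auxiliary identity for the leading $h^{*}$, and at that point the expression matches the shape of the right-hand side of (sl.2), which collapses it to $g\bigl((g^{-1}h^{*})^{*}g\bigr)^{*}$. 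A final change of variables and one more use of (sl.2.i) yields (sl.2.ii). The key idea you are missing is this preparatory identity for $h$; without it, (sl.2) alone does not specialize to $(\star)$.
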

\begin{proof}
Recall that $(x^{-1})^{*}=(x^{*})^{-1}$ for all $x\in Q$ since $*$ is a semiautomorphism.  For \eqref{sl.2.i}, simply let $g=1$ in \eqref{sl.2}.  For \eqref{sl.2.ii}, we see 
\begin{equation*}
g^{*} = ((g^{*}h)^{-1})^{*}((g^{*}h)^{*} \cdot g^{*}) = ((g^{*}h)^{-1})^{*}(g(hg)^{*})=(h^{-1}(g^{-1})^{*})^{*}(g(hg)^{*}).
\end{equation*}
Replace $h$ with $h^{-1}$ and then interchange $g$ and $h$ gives $h=(gh^{-1})^{*}(h^{*}(g^{-1}h^{*})^{*})$.  Applying this to \eqref{sl.2}, we get
\begin{alignat*}{3}
(hg)^{*}
&= (h(gh^{-1}\cdot h))^{*}
&&=(\underbracket[0.75pt]{h^{*}} \cdot(gh^{-1})^{*})\cdot h^{*}\\
&=\underbracket[0.75pt]{\left((g^{-1}h)^{*} \left[(h^{*}(g^{-1}h^{*}))^{*}\cdot (gh^{-1})^{*}\right]\right)^{*}h^{*}}
&&\stackrel{\eqref{sl.2}}{=}\left[((gh^{-1})^{*})^{*}h\right]\left[(g^{-1}h^{*})^{*}(((gh^{-1})^{*})^{*}h)\right]^{*}\\
&=\left[gh^{-1} \cdot h\right]\left[(g^{-1}h^{*})^{*}(gh^{-1}\cdot h)\right]^{*}
&&=g\left[(g^{-1}h^{*})^{*}g\right]^{*}.
\end{alignat*}
Replacing $h$ with $(g^*h)^*$, $g$ with $g^*$, and using \eqref{sl.2.i}, we have
\begin{alignat*}{3}
(g(hg)^{*})^{*}
&\stackrel{\eqref{sl.2.i}}{=}\underbracket[0.75pt]{((g^{*}h)^{*}g^{*})^{*}}
&&=g^{*}\left(\left[(g^{-1})^{*} ((g^{*}h)^{*})^{*}\right]^{*}g^{*}\right)^{*}\\
&=g^{*}\left(\left[(g^{-1})^{*}(g^{*}h)\right]^{*}g^{*}\right)^{*}
&&=\underbracket[0.75pt]{g^{*}(h^{*}g^{*})^{*}}\\
&\stackrel{\eqref{sl.2.i}}{=}(gh^{*})^{*}g.
\end{alignat*}
Therefore, we have $(g^{*}h^{*})^{*}g^{*}= (g^{*}(hg^{*})^{*})^{*} = ((gh)^{*}g)^{*}$.  Lastly, \eqref{sl.2.iii} follows from \eqref{sl.2.ii} and the previously stated fact that semiautomorphisms respect inverses.
\end{proof}

\begin{lemma}  
Let Q be a semiautomorphic IP loop, let $g_0 \in Z(Q)$ be fixed and let $*$ be a semiautomorphism of $Q$ such that, for all $g,h,k \in Q$
\begin{align}
\tag{\ref{L3}.1} (g^{*})^{*}&=g, \label{L3.1} \\
\tag{\ref{L3}.2} (gg_{0})^{*}&=g^{*}g_{0}, \label{L3.2} \\
\tag{\ref{L3}.3}  g^{*}h \cdot (k \cdot g^{*}h)^{*} &= (g \cdot h^{*}k)^{*}g \cdot h^{*}. \label{L3.3}
\end{align}
For an indeterminate $t$, define multiplication $\circ$ on $Q \cup Qt$ by
\begin{equation*}
g\circ h =gh, \qquad g \circ(ht) = (g^{*}h^{*})^{*}t, \qquad gt \circ h = (gh^{*})t, \qquad gt \circ ht = g_{0}(g^{*}h)^{*},
\end{equation*}
where $g,h \in Q$.  Then $(Q \cup Qt,\circ)$ is a semiautomorphic IP loop.
\label{L3}
\end{lemma}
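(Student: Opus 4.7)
The plan is to invoke Theorem \ref{samedef}, which reduces the task to showing that $(Q \cup Qt, \circ)$ is a flexible IP loop satisfying \eqref{RIF1}. I begin with two preliminary observations. Setting $g = 1$ in \eqref{L3.2} and using $1^* = 1$ yields $g_0^* = g_0$, and \eqref{L3.1} quickly gives that $1 \in Q$ is a two-sided identity for $\circ$: for example, $1 \circ (ht) = (1^* h^*)^* t = ((h^*)^*)t = ht$, and analogously in the other cases. Unique solvability of $a \circ x = b$ and $y \circ a = b$ follows from a case split on whether $a$ and $b$ lie in $Q$ or $Qt$; each of the four multiplication rules defines a bijection in either coordinate once the other is fixed, since $*$ is a bijection of $Q$ and $Q$ itself has cancellation. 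A short computation identifies inverses: $g \in Q$ retains its original inverse, while $(gt)^{-1} = (g^{-1})^* g_0^{-1} t$, obtained by solving $(gt) \circ (ht) = g_0 (g^* h)^* = 1$ (using $g_0^* = g_0$ and the fact that $*$ respects inverses).

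Next, I would handle the inverse property and flexibility together by case analysis on the locations of the operands in $Q$ versus $Qt$. The $Q$-only cases are inherited from $Q$. In the remaining cases, after expanding via the four rules defining $\circ$, each side reduces through \eqref{L3.1}, the inverse-respecting nature of $*$, and the identities \eqref{sl.2.i}--\eqref{sl.2.iii} of Lemma \ref{semiL1}, which are precisely calibrated to rearrange nested $*$-expressions of the form $g(hg)^*$, $((gh)^* g)^*$, and $(g(hg)^*)^*$ into the canonical form needed to recover the target element.

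Finally, I would verify \eqref{RIF1} by an eight-way case analysis on the locations of $x, y, z$ when the identity is applied as $L_x P_y R_x z = P_{yx} z$. Cases with at most one of $x, y, z$ lying in $Qt$ reduce to \eqref{RIF1} in $Q$ combined with the semiautomorphism property of $*$. The multi-$Qt$ cases each introduce additional layers of $*$ and factors of $g_0$; since $g_0 \in Z(Q)$ with $g_0^* = g_0$ and $Q$ is diassociative, these factors can be transported freely between terms. The main obstacle is the triple-$Qt$ case, where both sides expand into deeply nested $*$-expressions whose equality requires the combined use of \eqref{sl.2.i}, \eqref{sl.2.ii}, and \eqref{sl.2.iii}, ultimately resting on \eqref{L3.3}. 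This is where I expect the bulk of the computational effort to lie, and is where the hypotheses on $*$ earn their keep.
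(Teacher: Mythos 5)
Your outline is the paper's own strategy: reduce via Theorem \ref{samedef} (as noted at the start of Section \ref{sect4}) to checking that $(Q\cup Qt,\circ)$ is an IP loop satisfying \eqref{RIF1}, identify the inverse $(gt)^{-1}=(g_0^{-1}g^{-*})t$, and work through the case analyses using the identities of Lemma \ref{semiL1} together with \eqref{L3.3}; in outline this is sound. The one concrete misstep is your accounting of where the work lies in the eight-case verification of \eqref{RIF1}, which is essentially backwards. The all-$Qt$ case, which you flag as the main obstacle, is in fact the easiest: after pulling the central factors $g_0$ to the front using \eqref{L3.2}, it is a single application of \eqref{L3.3} (up to applying $*$ to both sides). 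Conversely, the cases with exactly one argument in $Qt$ do \emph{not} reduce to \eqref{RIF1} in $Q$ plus the semiautomorphism property of $*$, as you claim; each of them requires \eqref{L3.3} itself in combination with one of \eqref{sl.2.i}--\eqref{sl.2.iii}. For instance, when $x=g$, $y=h$, $z=kt$, the identity to be verified becomes $gh\cdot(k^*\cdot gh)^*=(g^*\cdot h^*k^*)^*g^*\cdot h^*$ after one application of \eqref{sl.2.iii}, and this is precisely an instance of \eqref{L3.3}, not a consequence of \eqref{RIF1} and $*$ being a semiautomorphism. Since \eqref{L3.3} is already in your toolkit for the other cases, this does not sink the argument, but as literally written that step of the plan would fail.
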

\begin{proof} 
Let $x,y,z \in (Q\cup Qt,\circ)$.  The calculations for $\circ$ are straightforward and left to the reader.  However, the following eight equalities must be verified.  Note that we have moved $g_0$ to the far left in each expression.
\begin{center}
\begin{tabular}{|l|l|l|l|l|l|}\hline
{\bf Cases}& $\bf x$&$\bf y$&$\bf z$&$\bf (x\circ y) \circ (z \circ (x\circ y))$&$\bf ((x \circ (y\circ z))\circ x) \circ y$\\\hline
{\bf Case 1:}& $g$&$h$&$k$&$gh\cdot (k\cdot gh)$&$(g \cdot hk)g \cdot h$\\\hline
{\bf Case 2:}& $g$&$h$&$kt$&${[(gh)^{*} (k \cdot (gh)^{*})^{*}]^{*}}t$&$[(g^{*} \cdot h^{*}k^{*})^{*}g^{*}\cdot h^{*}]t$\\\hline
{\bf Case 3:}& $g$&$ht$&$k$&$g_{0}[g^{*}h^{*} \cdot (k^{*} \cdot g^{*}h^{*})^{*}]^{*}$&$g_{0}[(g^{*}(hk^{*})^{*}\cdot g^{*})^{*}\cdot h]^{*}$\\\hline
{\bf Case 4:}& $gt$&$h$&$k$&$g_{0}[(gh^{*})^{*} \cdot (k^{*} \cdot (gh^{*})^{*})^{*}]^{*}$&$g_{0}[((g \cdot (hk)^{*})^{*}\cdot g)^{*}h]$\\\hline
{\bf Case 5:}& $gt$&$ht$&$k$&$g_{0}g_{0}(g^{*}h \cdot (k^{*} \cdot (g^{*}h)))^{*}$&$g_{0}g_{0}((g^{*} \cdot hk^{*})g^{*} \cdot h)^{*}$\\\hline
{\bf Case 6:}& $gt$&$h$&$kt$&$g_{0}((gh^{*}) \cdot (k^{*}\cdot gh^{*}))t$&$g_{0}(((g^{*} \cdot (h^{*}k^{*})^{*})g^{*})^{*} \cdot h^{*})t$\\\hline
{\bf Case 7:}& $g$&$ht$&$kt$&$g_{0}((g^{*}h^{*})^{*} \cdot (k^{*} \cdot (g^{*}h^{*})^{*}))t$&$g_{0}((g(h^{*}k)^{*} \cdot g)^{*}\cdot h^{*})^{*}t$\\\hline
{\bf Case 8:}& $gt$&$ht$&$kt$&$g_{0}g_{0}((g^{*}h)^{*} \cdot (k\cdot (g^{*}h)^{*})^{*})^{*}t$&$g_{0}g_{0}((g \cdot h^{*}k)^{*}g \cdot h^{*})^{*}t$\\\hline
\end{tabular}
\end{center}
Note that cases $1$ and $5$ follow directly from \eqref{RIF1}.  Similarly, case $8$ follows from \eqref{L3.3}.  

For the case 2, we have
\[\underbracket[0.75pt]{[(gh)^{*} (k \cdot (gh)^{*})^{*}]^{*}}t
\stackrel{\eqref{sl.2.i}}{=}[\underbracket[0.75pt]{gh \cdot (k^{*} \cdot gh)^{*}}]t
\stackrel{\eqref{L3.3}}{=}[(g^{*} \cdot h^{*}k^{*})^{*}g^{*}\cdot h^{*}]t .\]

For case 3, we have
\[g_{0}[\underbracket[0.75pt]{g^{*}h^{*} \cdot (k^{*} \cdot g^{*}h^{*})^{*}}]^{*}
\stackrel{\eqref{L3.3}}{=}g_{0}\underbracket[0.75pt]{[(g \cdot hk^{*})^{*}g^{*}} \cdot h]^{*}
\stackrel{\eqref{sl.2.ii}}{=}g_{0}[(g^{*}(hk^{*})^{*}\cdot g^{*})^{*}\cdot h]^{*}.\]

For case 4, we have
\[g_{0}\underbracket[0.75pt]{[(gh^{*})^{*} \cdot (k^{*} \cdot (gh^{*})^{*})^{*}]^{*}}
\stackrel{\eqref{sl.2.iii}}{=}g_{0}[\underbracket[0.75pt]{gh^{*} \cdot (k \cdot gh^{*})^{*}}]
\stackrel{\eqref{L3.3}}{=}g_{0}[((g \cdot (hk)^{*})^{*}\cdot g)^{*}h].\]

For case 6, we have
\[g_{0}(\underbracket[0.75pt]{(gh^{*}) \cdot (k^{*}\cdot gh^{*})})t 
\stackrel{\eqref{RIF1}}{=}g_{0}(\underbracket[.075pt]{(g \cdot h^{*}k^{*})g} \cdot h^{*})t
\stackrel{\eqref{sl.2.i}}{=}g_{0}(((g^{*} \cdot (h^{*}k^{*})^{*})g^{*})^{*} \cdot h^{*})t.\]

For case 7, we have
\begin{align*}
g_{0}(\underbracket[0.75pt]{(g^{*}h^{*})^{*} \cdot (k^{*} \cdot (g^{*}h^{*})^{*})})t
&\stackrel{\eqref{sl.2.i}}{=}g_{0}(\underbracket[0.75pt]{(g^{*}h^{*}) \cdot (k \cdot (g^{*}h^{*}))})^{*}t
\stackrel{\eqref{RIF1}}{=}g_{0}\underbracket[0.75pt]{((g^{*} \cdot h^{*}k)g^{*}\cdot h^{*})^{*}}t\\
&\stackrel{\eqref{sl.2.i}}{=}g_{0}((g(h^{*}k)^{*} \cdot g)^{*}\cdot h^{*})^{*}t.
\end{align*}

Now, to see $(Q\cup Qt,\circ)$ is an IP loop, suppose $x\in Qt$ with $x=gt$ for some $g\in Q$.  Then note
\begin{equation*}
1\circ x=1\circ gt=(1g^{*})^{*}t=gt=x=gt=(g1^{*})t=gt\circ 1=x\circ 1.
\end{equation*}
Moreover, $x^{-1}=(gt)^{-1}=(g_{0}^{-1}g^{-*})t$, where $g^{-*}=(g^{-1})^{*}=(g^{*})^{-1}$.  For $x^{-1}\circ (x\circ y)=y$, we have the following $4$ cases:\\
\textbf{Case 1.}  Let $x=g,y=h$ for some $g,h\in Q$.  Thus
\[
g^{-1}\circ (g\circ h) = g^{-1}(gh)=h.
\]
\textbf{Case 2.}  Let $x=g,y=ht$ for some $g,h\in Q$.  Thus
\[
g^{-1}\circ (g\circ ht)=g^{-1}\circ (g^{*}h)^{*}t=(g^{-*}(g^{*}h^{*}))^{*}t=(h^{*})^{*}t=ht.
\]
\textbf{Case 3.}  Let $x=gt,y=h$ for some $g,h\in Q$.  Thus
\[
(gt)^{-1}\circ (gt\circ h)=(g_{0}^{-1}g^{-*})t\circ (gh^{*})t=g_{0}[g_{0}^{-1}(g^{-*})^{*}\cdot gh^{*}]^{*}=[g^{-1}\cdot gh^{*}]^{*}=(h^{*})^{*}=h.
\]
\textbf{Case 4.}  Let $x=gt,y=ht$ for some $g,h\in Q$.  Thus
\[
(gt)^{-1}\circ (gt\circ ht)=(g_{0}^{-1}g^{-*})t\circ(g_{0}(g^{*}h)^{*})=[g_{0}^{-1}g^{-*}\cdot (g_{0}(g^{*}h)^{*})^{*}]t=[g^{-*}\cdot g^{*}h]t=ht.
\]
Finally, $(y\circ x)\circ x^{-1}$ follows by a similar argument and is left to the reader.
\end{proof}
\begin{theorem}  Let $Q$ be a semiautomorphic IP loop, $g_{0}\in Z(Q)$, and $*$ an involutory antiautomorphism of $G$ such that $g_{0}^{*} = g_{0}, gg^{*} \in Z(Q)$ for every $g \in Q$.  For an indeterminate $t$, define multiplication $\circ$ on $Q \cup Qt$ by
\begin{equation*}
g\circ h =gh, \qquad g \circ(ht) = (hg)t, \qquad gt \circ h = (gh^{*})t, \qquad gt \circ ht = g_{0}h^{*}g,
\end{equation*}
where $g,h \in Q$.  Then $(Q \cup Qt,\circ)$ is a semiautomorphic IP loop.
\label{Cconstruction}
\end{theorem}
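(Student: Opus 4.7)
The plan is to reduce Theorem \ref{Cconstruction} to Lemma \ref{L3}. Once one verifies that an involutory antiautomorphism $*$ with the stated centrality condition is a semiautomorphism satisfying the hypotheses \eqref{L3.1}--\eqref{L3.3}, and that the $\circ$-multiplication defined in the theorem matches the one in the lemma, the conclusion is immediate.

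First, I would observe that an involutory antiautomorphism of a flexible loop is automatically a semiautomorphism, since $(xyx)^{*}=((xy)x)^{*}=x^{*}\cdot(xy)^{*}=x^{*}\cdot y^{*}x^{*}=x^{*}y^{*}x^{*}$. Next, using $(g^{*}h^{*})^{*}=(h^{*})^{*}(g^{*})^{*}=hg$ and $(g^{*}h)^{*}=h^{*}g$, the products $g\circ(ht)=(g^{*}h^{*})^{*}t$ and $gt\circ ht=g_{0}(g^{*}h)^{*}$ of Lemma \ref{L3} reduce to $(hg)t$ and $g_{0}h^{*}g$, respectively, matching the theorem. Condition \eqref{L3.1} is the given involutory property, and \eqref{L3.2} follows from $(gg_{0})^{*}=g_{0}^{*}g^{*}=g_{0}g^{*}=g^{*}g_{0}$, using the antiautomorphism property, $g_{0}^{*}=g_{0}$, and $g_{0}\in Z(Q)$.

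The only nontrivial step, and the expected main obstacle, is verifying \eqref{L3.3}: $g^{*}h\cdot(k\cdot g^{*}h)^{*}=(g\cdot h^{*}k)^{*}g\cdot h^{*}$. The strategy is to set $c=gg^{*}$, $d=hh^{*}$, $e=kk^{*}$, all in $Z(Q)$ by Lemma \ref{L1}, so that $g^{*}=cg^{-1}=g^{-1}c$, $h^{*}=dh^{-1}=h^{-1}d$, $k^{*}=ek^{-1}=k^{-1}e$. Applying the antiautomorphism to both sides rewrites the identity as
\[
(g^{*}h)\bigl((h^{*}g)k^{*}\bigr)=\bigl(((k^{*}h)g^{*})g\bigr)h^{*}.
\]
On each side I would substitute the expressions $g^{*}=cg^{-1}$ etc.\ and then repeatedly use the nucleus property of $c,d,e\in Z(Q)\subseteq N(Q)$ to pull the central scalars out to the front. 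After these pulls, the left side reduces to $cde\cdot(g^{-1}h)\bigl((h^{-1}g)k^{-1}\bigr)$ and the right side to a similar expression. Using $(g^{-1}h)^{-1}=h^{-1}g$ (valid in any IP loop) together with the left inverse property, $(g^{-1}h)\bigl((h^{-1}g)k^{-1}\bigr)=(g^{-1}h)\bigl((g^{-1}h)^{-1}k^{-1}\bigr)=k^{-1}$, and a parallel computation for the right side. Both sides thus collapse to $cde\cdot k^{-1}=cd\cdot k^{*}$, establishing \eqref{L3.3}.

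The main delicacy is bookkeeping: each pull of a central scalar through a product must be justified by one of the three nucleus identities (e.g.\ $(xc)y=x(cy)$), never by an unwarranted rearrangement of the three noncentral letters $g,h,k$ amongst themselves. Once this is done carefully, Lemma \ref{L3} applies and yields that $(Q\cup Qt,\circ)$ is a semiautomorphic IP loop, completing the proof.
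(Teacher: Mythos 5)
Your proposal is correct and follows essentially the same route as the paper: the paper's proof of Theorem \ref{Cconstruction} is precisely the reduction to Lemma \ref{L3}, asserting that an involutory antiautomorphism with $g_0^*=g_0$ and $gg^*\in Z(Q)$ satisfies \eqref{L3.1}--\eqref{L3.3} and that the two multiplications coincide. You merely supply the verification of \eqref{L3.3} (via $g^*=cg^{-1}$ with $c=gg^*\in Z(Q)$ and the IP identities) that the paper leaves implicit, and that verification checks out.
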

\begin{proof}
We see that by letting $*$ be an involutory antiautomorphism, \eqref{L3.1}, \eqref{L3.2} and \eqref{L3.3} of Lemma \ref{L3} are satisfied.  Note that multiplication in Lemma \ref{L3} becomes the multiplication in Theorem \ref{Cconstruction}.
\end{proof}
We now move to our second construction, based on de Barros and Juriaans' construction \cite{BJ1,BJ2}.  We note that if $Q$ is commutative, then multiplication defined by Theorem \ref{Cconstruction} is equivalent to multiplication from Theorem \ref{dBJconstruction}.  We begin with the following lemma.

\begin{lemma}
Let $Q$ be an IP Loop and $*$ be a bijection such that $gg^{*}\in Z(Q)$ for all $g\in Q$.  Then $\forall g,h \in Q$,
\begin{align}
\tag{\ref{TM}.1} g \cdot hh^{*} &= gh \cdot h^{*}, \label{A1}\\
\tag{\ref{TM}.2} gg^{*} \cdot h &= g\cdot g^{*}h, \label{A2}\\
\tag{\ref{TM}.3} g(hh^{*})\cdot g^{*} &= gh \cdot h^{*}g^{*}. \label{A3}
\end{align}
\label{TM}
\end{lemma}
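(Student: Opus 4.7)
The plan rests on Lemma~\ref{L1}, which gives $h h^{*} = h^{*} h$ and $g g^{*} = g^{*} g$, and both products lie in $Z(Q) = C(Q)\cap N(Q)$. Thus these two elements commute with every element of $Q$ and associate freely in any three-factor product, a fact I would invoke repeatedly.

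For \eqref{A1}, I would start from the inverse-property identity $g = (gh)h^{-1}$ and multiply by $hh^{*}$ on the right:
\[
g\cdot hh^{*} = \bigl((gh)h^{-1}\bigr)\cdot(hh^{*}).
\]
Since $hh^{*}\in N(Q)$, the right-nucleus reassociation gives $((gh)h^{-1})(hh^{*}) = (gh)\bigl(h^{-1}(hh^{*})\bigr)$, and the inverse property collapses the inner expression via $h^{-1}(h\cdot h^{*}) = h^{*}$, yielding $gh\cdot h^{*}$. Identity \eqref{A2} is the mirror image: start from $h = g^{-*}(g^{*}h)$ via the inverse property, multiply on the left by $gg^{*}$, apply the left-nucleus property of $gg^{*}\in N(Q)$, and conclude with $(gg^{*})g^{-*} = g$.

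For \eqref{A3}, the cleanest route is to show that both sides equal the symmetric central product $(hh^{*})(gg^{*})$. The left-hand side is immediate: commutativity of $hh^{*}$ gives $g(hh^{*}) = (hh^{*})g$, and then $((hh^{*})g)g^{*} = (hh^{*})(gg^{*})$ by the right-nucleus property of $hh^{*}$. The right-hand side is the substantial step, equivalent to showing that the associator of $(gh,\,h^{*},\,g^{*})$ vanishes, which is not captured by any single nucleus identity. My plan is to compute $(h^{-1}g^{-1})\cdot\bigl((hh^{*})(gg^{*})\bigr)$ by chaining the right-nucleus property of $gg^{*}$, commutativity of $hh^{*}$, the left-nucleus property of $hh^{*}$, the identity $h^{-1}(hh^{*}) = h^{*}$, another right-nucleus step with $gg^{*}$, and $g^{-1}(gg^{*}) = g^{*}$; this collapses to $h^{*}g^{*}$. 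Applying the inverse property with $gh$ on the left then gives $(gh)(h^{*}g^{*}) = (hh^{*})(gg^{*})$, matching the left-hand side.

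The main obstacle is \eqref{A3}. Identities \eqref{A1} and \eqref{A2} each reduce to a single reassociation across a central nuclear element followed by an inverse-property cancellation, but in \eqref{A3} neither $gh$ nor $h^{*}g^{*}$ is central or nuclear, so the associator $(gh,\,h^{*},\,g^{*})$ cannot be killed by a one-step nuclear argument and must be neutralized indirectly by routing both sides through $(hh^{*})(gg^{*})$.
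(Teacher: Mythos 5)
Your proofs of \eqref{A1} and \eqref{A2} are essentially the paper's: a single reassociation across the central nuclear element followed by an inverse-property cancellation (the paper proves \eqref{A2} by cancelling $g^{-1}$ on the outside rather than $g^{-*}$ on the inside, which is only a cosmetic difference). For \eqref{A3} you take a genuinely different route. The paper first derives the auxiliary identities $g^{-1}h^{*}\cdot hg=hh^{*}$ and $g^{*}h\cdot k=g\cdot g^{*}(g^{-1}h\cdot k)$, substitutes $k=hg$ and $h\mapsto h^{*}$ to obtain $g^{*}h^{*}\cdot hg=gg^{*}\cdot hh^{*}=h(gg^{*})\cdot h^{*}$, and then uses the Lemma \ref{L1} argument (a two-factor product lying in $Z(Q)$ commutes) to flip the left side to $hg\cdot g^{*}h^{*}$, giving \eqref{A3} with $g$ and $h$ interchanged. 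You instead identify both sides of \eqref{A3} with the central element $(hh^{*})(gg^{*})$: the left side by commuting $hh^{*}$ past $g$ and one nuclear reassociation, and the right side by computing $(h^{-1}g^{-1})\cdot\bigl((hh^{*})(gg^{*})\bigr)=h^{*}g^{*}$ and then cancelling via the left inverse property together with $(gh)^{-1}=h^{-1}g^{-1}$. I verified your chain for the right-hand side; every step is licensed by $gg^{*},hh^{*}\in N(Q)\cap C(Q)$ and the inverse property, and the one implicit ingredient, the antiautomorphic inverse property $(gh)^{-1}=h^{-1}g^{-1}$ of IP loops, is also used without comment in the paper's own proof. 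Your observation that \eqref{A3} amounts to the vanishing of the associator of $(gh,h^{*},g^{*})$ is correct, and your version makes it transparent that the common value of both sides is $gg^{*}\cdot hh^{*}$, at the price of the slightly less direct ``multiply by the inverse and cancel'' manoeuvre in place of the paper's substitution gymnastics. Both arguments are complete and correct.
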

\begin{proof}
For \eqref{A1}, simply note that
\begin{equation*}
g \cdot hh^{*} 
= (gh \cdot h^{-1}) \cdot hh^{*}
= gh \cdot (h^{-1} \cdot hh^{*})
= gh \cdot h^{*}.
\end{equation*}
Similarly for \eqref{A2}, since $gg^{*} \in Z(Q)$, we have $g^{-1}(gg^{*}\cdot h) = (g^{-1} \cdot gg^{*})h=g^{*}h$.  Multiply by $g$ on the left to get $gg^{*}\cdot h = g \cdot g^{*}h$.

For \eqref{A3}, we see $hh^{*}=hh^{*}\cdot g^{-1}g=g^{-1}(hh^{*} \cdot g)\stackrel{\eqref{A2}}{=}g^{-1}\cdot h(h^{*}g)$.  Now replace $g$ with $(h^{*})^{-1}g$ to derive $g^{-1}h^{*} \cdot hg = hh^{*}$.  Now,
\begin{equation*}
g^{*}h \cdot k 
= (g^{-1}\cdot gg^{*})h \cdot k
=gg^{*} (g^{-1}h \cdot k)
\stackrel{\eqref{A2}}{=}g \cdot g^{*}(g^{-1}h \cdot k).
\end{equation*}
Finally, substitute $k=(hg)$ and $h=h^{*}$ so that
\begin{equation*}
g^{*}h^{*} \cdot hg 
= g \cdot g^{*}(\underbracket[0.75pt]{g^{-1}h^{*} \cdot hg})
=gg^{*}\cdot (hh^{*})
\stackrel{\eqref{A2}}{=}h(gg^{*})\cdot h^{*}.
\end{equation*}
Since $g^{*}h^{*} \cdot hg = hg \cdot g^{*}h^{*}$, we have $hg \cdot g^{*}h^{*}=h(gg^{*})\cdot h^{*}$.
\end{proof}

\begin{theorem}  Let $Q$ be a semiautomorphic IP loop, $g_{0}\in Z(Q)$, and $*$ an involutory antiautomorphism of $Q$ such that $g_{0}^{*} = g_{0}, gg^{*} \in Z(Q)$ for every $g \in Q$.  For an indeterminate $t$, define multiplication $\circ$ on $Q \cup Qt$ by
\begin{equation*}
g\circ h =gh, \qquad g \circ(ht) = (gh)t, \qquad gt \circ h = (h^{*}g)t, \qquad gt \circ ht = g_{0}gh^{*},
\end{equation*}
where $g,h \in Q$.  Then $(Q \cup Qt,\circ)$ is a semiautomorphic IP loop.
\label{dBJconstruction}
\end{theorem}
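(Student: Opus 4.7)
The plan is to invoke Theorem \ref{samedef}: it suffices to show $(Q \cup Qt, \circ)$ is an IP loop satisfying \eqref{RIF1}. The tools are the involutory antiautomorphism properties $(ab)^* = b^*a^*$ and $(g^*)^* = g$, the hypotheses $g_0^* = g_0$ and $gg^* \in Z(Q)$, Lemma \ref{L1} (which gives $g^*g = gg^*$), and the identities \eqref{A1}--\eqref{A3} of Lemma \ref{TM}.

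For the inverse property, $1 \in Q$ is the two-sided identity, and a direct calculation using $g_0^* = g_0$ and centrality of $g_0$ and $gg^*$ shows that $(gt)^{-1} = (g_0^{-1} g^{-*})t$, where $g^{-*} := (g^*)^{-1} = (g^{-1})^*$. The equation $x^{-1}\circ(x\circ y) = y$ then splits into four cases according as $x, y \in Q$ or $Qt$, each resolved by a short reduction to the IP in $Q$ after applying the definition of $\circ$; the equation $(y \circ x)\circ x^{-1} = y$ is symmetric.

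For \eqref{RIF1}, we verify the equivalent identity $(x\circ y)\circ(z\circ(x\circ y)) = ((x\circ(y\circ z))\circ x)\circ y$ in eight cases according to which of $x, y, z$ lie in $Qt$. Case 1 (all in $Q$) is precisely \eqref{RIF1} in $Q$. In Case 8 (all in $Qt$), both sides pick up a central factor $g_0^2$ and reduce, after using \eqref{A3} to rewrite $gh^*\cdot hg^* = g(h^*h)g^*$ and \eqref{A1} to rewrite $(kh^*)h = k(h^*h)$, to the common expression $g_0^2 \cdot (gg^*)(hh^*) k \cdot t$. The six mixed cases reduce analogously: each collapses, after pulling out the appropriate central factors, to an identity in $Q$ provable from \eqref{RIF1} together with \eqref{A1}--\eqref{A3}.

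The main obstacle is disciplined bookkeeping: every application of $*$ reverses factor order, and \eqref{RIF1} is not symmetric in its variables, so alignment errors are easy. The conceptual lifeline is Lemma \ref{TM}, which allows us to treat each $gg^*$ as if central with respect to whatever factor it meets; once all central factors ($g_0$'s and $gg^*$'s) are collected on the left, each case collapses to \eqref{RIF1} in $Q$. As a sanity check, commutativity of $Q$ collapses this construction to that of Theorem \ref{Cconstruction}, recovering that proof in the commutative special case.
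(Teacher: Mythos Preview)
Your proposal is correct and follows essentially the same route as the paper: reduce via Theorem~\ref{samedef} to checking IP and \eqref{RIF1}, split \eqref{RIF1} into eight cases by coset type, and resolve each using Lemma~\ref{TM} together with \eqref{RIF1}/\eqref{RIF2} in $Q$. The paper is slightly more specific in noting that Cases~5 and~7 follow directly from \eqref{RIF1} and Case~6 from \eqref{RIF2}, whereas you lump the mixed cases together, but the substance is identical.
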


\begin{proof}
As before, we summarize the eight cases below.
\begin{center}
\begin{tabular}{|l|l|l|l|l|l|}\hline
{\bf Cases}& $\bf x$&$\bf y$&$\bf z$&$\bf (x\circ y) \circ (z \circ (x\circ y))$&$\bf ((x \circ (y\circ z))\circ x) \circ y$\\\hline
{\bf Case 1:}& $g$&$h$&$k$&$gh\cdot (k\cdot gh)$&$(g \cdot hk)g \cdot h$\\\hline
{\bf Case 2:}& $g$&$h$&$kt$&$(gh \cdot ((gh)^{*}\cdot k))t$&$(h^{*}(g^{*}(g \cdot hk)))t$\\\hline
{\bf Case 3:}& $g$&$ht$&$k$&$g_{0}(gh \cdot h^{*}g^{*})k$&$g_{0}(g^{*}(g\cdot k^{*}h) \cdot h^{*})$\\\hline
{\bf Case 4:}& $gt$&$h$&$k$&$g_{0}((h^{*}g) \cdot ((h^{*}g)^{*} \cdot k^{*}))$&$g_{0}((k^{*}h^{*}\cdot g)g^{*})h$\\\hline
{\bf Case 5:}& $gt$&$ht$&$k$&$g_{0}g_{0}(gh^{*} \cdot (k\cdot gh^{*}))$&$g_{0}g_{0}((g\cdot h^{*}k)g \cdot h^{*})$\\\hline
{\bf Case 6:}& $gt$&$h$&$kt$&$g_{0}((h^{*}g \cdot k^{*})\cdot (h^{*}g))t$&$g_{0}(h^{*} \cdot (g \cdot (hk)^{*})g)t$\\\hline
{\bf Case 7:}& $g$&$ht$&$kt$&$g_{0}((gh \cdot k^{*}) \cdot gh)t$&$g_{0}(((g \cdot hk^{*})g \cdot h)t)$\\\hline
{\bf Case 8:}& $gt$&$ht$&$kt$&$g_{0}g_{0}(gh^{*} \cdot hg^{*})k)t$&$g_{0}((kh^{*} \cdot g)g^{*}\cdot h)t$\\\hline
\end{tabular}
\end{center}
Note that cases $1,5$ and $7$ follow from \eqref{RIF1}, and case 6 follows from \eqref{RIF2}.

For the case 2, we have
\begin{align*}
(\underbracket[0.75pt]{gh \cdot ((gh)^{*}\cdot k)})t
&\stackrel{\eqref{A2}}{=}((gh \cdot (gh)^{*})k)t
=((\underbracket[0.75pt]{gh\cdot h^{*}g^{*}})k)t
\stackrel{\eqref{A3}}{=}(((g\cdot hh^{*})g^{*})k)t\\ 
&=((gg^{*}\cdot hh^{*})k)t 
=(g^{*}g \cdot (h^{*}h\cdot k))t
=(h^{*} \cdot (\underbracket[0.75pt]{g^{*}g \cdot hk}))t\\ 
&\stackrel{\eqref{A2}}{=}(h^{*}(g^{*}(g \cdot hk)))t.
\end{align*}

For case 3, we have
\begin{align*}
g_{0}(\underbracket[0.75pt]{gh \cdot h^{*}g^{*}})k
&\stackrel{\eqref{A3}}{=}g_{0}(g(hh^{*})\cdot g^{*})k 
=g_{0}(hh^{*}gg^{*})k
=g_{0}(gg^{*}(\underbracket[0.75pt]{k\cdot hh^{*}})) \\
&\stackrel{\eqref{A1}}{=}g_{0}(gg^{*}(kh \cdot h^{*}))
=g_{0}((\underbracket[0.75pt]{g^{*}g \cdot k^{*}h})h^{*})
\stackrel{\eqref{A2}}{=}g_{0}(g^{*}(g\cdot k^{*}h) \cdot h^{*}).
\end{align*}

For case 4, we have
\begin{align*}
g_{0}(\underbracket[0.75pt]{(h^{*}g) \cdot ((h^{*}g)^{*} \cdot k^{*})})
&\stackrel{\eqref{A2}}{=}g_{0}((h^{*}g \cdot (h^{*}g)^{*})k^{*}) 
=g_{0}((\underbracket[0.75pt]{h^{*}g \cdot g^{*}h})k^{*})
\stackrel{\eqref{A3}}{=}g_{0}((h^{*} \cdot gg^{*})h \cdot k^{*})\\
&=g_{0}((gg^{*} \cdot hh^{*})k^{*})
=g_{0}((\underbracket[0.75pt]{k^{*} \cdot h^{*}h})\cdot gg^{*})
\stackrel{\eqref{A2}}{=}g_{0}((k^{*}h^{*} \cdot h)\cdot gg^{*})\\
&=g_{0}(\underbracket[0.75pt]{k^{*}h^{*}\cdot gg^{*}})h 
\stackrel{\eqref{A1}}{=}g_{0}((k^{*}h^{*}\cdot g)g^{*})h.
\end{align*}

For case 8, we have
\begin{align*}
g_{0}g_{0}((\underbracket[0.75pt]{gh^{*} \cdot hg^{*}})k)t
&\stackrel{\eqref{A3}}{=}g_{0}g_{0}((g(h^{*}h)\cdot g^{*})k)t 
=g_{0}g_{0}((g^{*}g \cdot h^{*}h)k)t
=g_{0}g_{0}((k \cdot h^{*}h)\cdot gg^{*})t \\
&=g_{0}g_{0}((\underbracket[0.75pt]{kh^{*} \cdot gg^{*}})h)t
\stackrel{\eqref{A2}}{=}g_{0}g_{0}((kh^{*} \cdot g)g^{*}\cdot h)t 
\end{align*} 
The argument for IP is similar to Theorem \ref{Cconstruction} and is left to the reader.
\end{proof}
\section{Connections between the extended Chein and extended de Barros-Juriaans constructions}
\label{connections}
We now focus our attention on combining the two constructions from Theorems \ref{Cconstruction} and \ref{dBJconstruction}.  We note that most of the following computations are straightforward, and are therefore left to the reader.
\begin{proposition}
$\quad$ Let $Q$ be a semiautomorphic IP loop and let $g_{0}\in Z(Q)$.  Then $g_{0}\in Z(Q\cup Qt,\circ)$ in either construction.
\label{P1}
\end{proposition}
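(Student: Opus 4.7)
The plan is to verify directly that $g_0$ lies in both the commutant and the nucleus of $(Q\cup Qt,\circ)$, since by definition these intersect to give the center. On elements of $Q$ themselves everything follows from $g_0\in Z(Q)$, so the real work concerns elements of $Qt$.

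For commutativity, I will exploit the hypothesis $g_0^* = g_0$ (part of the hypotheses of both Theorems \ref{Cconstruction} and \ref{dBJconstruction}). In the Chein construction,
\[
g_0\circ(gt) = (g\cdot g_0)t = (g\cdot g_0^*)t = (gt)\circ g_0,
\]
using $g_0\in Z(Q)$; and in the de Barros-Juriaans construction,
\[
g_0\circ(gt) = (g_0\cdot g)t = (g_0^*\cdot g)t = (gt)\circ g_0.
\]
Thus $g_0\in C(Q\cup Qt,\circ)$ in both cases.

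For nucleus membership, I will verify each of the three associativity conditions
\[
g_0\circ(x\circ y) = (g_0\circ x)\circ y,\qquad x\circ(g_0\circ y) = (x\circ g_0)\circ y,\qquad x\circ(y\circ g_0) = (x\circ y)\circ g_0
\]
by case analysis on whether $x,y$ lie in $Q$ or in $Qt$. When both lie in $Q$, each reduces to the associativity of $g_0$ with elements of $Q$, which holds since $g_0\in N(Q)$. In each mixed case I will expand both sides using the multiplication rules of Theorems \ref{Cconstruction} and \ref{dBJconstruction}, strip off any trailing $t$, and reduce to an identity in $Q$ that follows from $g_0\in Z(Q)$ combined with $g_0^* = g_0$. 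For instance, in the Chein construction with $x = gt$ and $y = h$, the first condition unwinds to $((gh^*)g_0)t = ((gg_0)h^*)t$, which holds since $g_0$ is central in $Q$.

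The main obstacle, though mild, will be the cases where both auxiliary arguments lie in $Qt$, because $gt\circ ht$ already carries a factor of $g_0$, and a further multiplication by $g_0$ introduces a second copy to shuffle around. These cases nevertheless collapse cleanly because both copies of $g_0$ belong to $Z(Q)$ and can be slid past any factor in $Q$ as needed. Once all cases are checked, we obtain $g_0\in N(Q\cup Qt,\circ)$, and combined with commutativity this gives $g_0\in Z(Q\cup Qt,\circ)$ in both constructions.
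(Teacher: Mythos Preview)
Your proposal is correct and follows essentially the same approach as the paper: show $g_0$ commutes with elements of $Qt$ via $g_0^* = g_0$, then verify nucleus membership by case analysis on whether the auxiliary arguments lie in $Q$ or $Qt$. The only minor difference is that the paper checks just the left and right associativity conditions (observing these suffice), whereas you plan to check all three; since $(Q\cup Qt,\circ)$ is an IP loop, even one would suffice, but your plan is sound as written.
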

\begin{proof}
Suppose $(Q\cup Qt,\circ)$ has the multiplication as in Theorem \ref{Cconstruction} and let $g_{0}\in Z(Q)$.  First note
\begin{equation*}
g_{0}\circ ht=(hg_{0})t=(hg_{0}^{*})t=ht\circ g_{0}.
\end{equation*}  
Hence, $g_{0}\in C(Q\cup Qt)$.  Now, let $x,y\in Q\cup Qt$.  It is enough to show $g_{0}\circ (x\circ y)=(g_{0}\circ x)\circ y$ and $x\circ (y\circ g_{0})=(x\circ y)\circ g_{0}$.  We have the following four cases:\\
\textbf{Case 1.}  Let $x=g,y=h$ for some $g,h\in Q$.  Thus
\begin{align*}
g_{0}\circ(g\circ h)&=g_{0}(gh)=(g_0g)h=(g_{0}\circ g)\circ h,\\ 
g\circ (h\circ g_{0})&=g(hg_{0})=(gh)g_0=(g\circ ht)\circ g_{0}.
\end{align*}
\textbf{Case 2.}  Let $x=g,y=ht$ for some $g,h\in Q$.  Thus
\begin{align*}
g_{0}\circ(g\circ ht)&=g_{0}\circ (hg)t=(hg\cdot g_{0})t=(h\cdot g_{0}g)t=g_{0}g\circ ht=(g_{0}\circ g)\circ ht,\\ 
g\circ (ht\circ g_{0})&=g\circ (hg_{0})t=(hg_{0}\cdot g)t=(hg\cdot g_{0})t=(hg)t\circ g_{0}=(g\circ ht)\circ g_{0}.
\end{align*}
\textbf{Case 3.}  Let $x=gt,y=h$ for some $g,h\in Q$.  Thus
\begin{align*}
g_{0}\circ(gt\circ h)&=g_{0}\circ (gh^{*})t=(g_{0}\cdot gh^{*})t=(gg_{0}\cdot h^{*})t=(gg_{0})\circ h=(g_{0}\circ gt)\circ h,\\ 
gt\circ (h\circ g_{0})&=gt\circ (hg_{0})=(g\cdot h^{*}g_{0})t=(gh^{*}\cdot g_{0})t=(gh^{*})t\circ g_{0}=(gt\circ h)\circ g_{0}.
\end{align*}
\textbf{Case 4.}  Let $x=gt,y=ht$ for some $g,h\in Q$.  Thus
\begin{align*}
g_{0}\circ(gt\circ ht)&=g_{0}\circ g_{0}(h^{*}g)=g_{0}\cdot g_{0}(h^{*}g)=g_{0}\cdot h^{*}(gg_{0})=(gg_{0})t\circ ht\\
&=(g_{0}\circ gt)\circ ht,\\ 
gt\circ (ht\circ g_{0})&=gt\circ (hg_{0})t=g_{0}\cdot (hg_{0})^{*}g=g_{0}\cdot (h^{*}g_{0}\cdot g)=(g_{0}\cdot h^{*}g)\circ g_{0}\\
&=(gt\circ ht)\circ g_{0}.
\end{align*}
The argument is similar if the multiplication is define as in Theorem \ref{dBJconstruction} and is left to the reader.
\end{proof}
\begin{proposition}
Let $Q$ be a semiautomorphic IP loop and $*$ an antiautomorphism of $Q$.  Reusing the symbol $*$, we extend $*$ on $Q\cup Qt$ as 
\begin{align*}
g^{*}&=g^{*},\\
(gt)^{*}&=gt.
\end{align*}
Then in either construction, the extended $*$ is an antiautomorphism of $(Q\cup Qt,\circ)$.
\label{P2}
\end{proposition}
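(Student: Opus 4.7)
The plan is to verify directly that $(x\circ y)^{*}=y^{*}\circ x^{*}$ for all $x,y\in Q\cup Qt$, splitting into the four cases according to whether each of $x,y$ lies in $Q$ or in $Qt$. In each case the identity reduces to a short computation using the multiplication rules of the construction together with the hypotheses that $*$ is an antiautomorphism of $Q$, that $g_{0}^{*}=g_{0}$, and that $g_{0}\in Z(Q)$ (so by Proposition \ref{P1}, $g_{0}$ is central in the extended loop as well).

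For the Chein construction (Theorem \ref{Cconstruction}): when $x=g$, $y=h\in Q$ the identity $(gh)^{*}=h^{*}g^{*}$ is just antiautomorphicity on $Q$. When $x=g$, $y=ht$ we compute $(g\circ ht)^{*}=((hg)t)^{*}=(hg)t$, while $y^{*}\circ x^{*}=ht\circ g^{*}=(h(g^{*})^{*})t=(hg)t$ using involutivity. The case $x=gt$, $y=h$ is symmetric. The remaining case $x=gt$, $y=ht$ gives $(gt\circ ht)^{*}=(g_{0}h^{*}g)^{*}=g^{*}hg_{0}=g_{0}\cdot g^{*}h$, while $y^{*}\circ x^{*}=ht\circ gt=g_{0}g^{*}h$, where the last equality uses $g_{0}^{*}=g_{0}$ and that $g_{0}$ is central.

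For the de Barros--Juriaans construction (Theorem \ref{dBJconstruction}) the verification is analogous. The pure $Q$ case is immediate. For $x=g$, $y=ht$ we get $(g\circ ht)^{*}=(gh)t=ht\circ g^{*}=y^{*}\circ x^{*}$; for $x=gt$, $y=h$ we get $(gt\circ h)^{*}=(h^{*}g)t=h^{*}\circ gt$; and for $x=gt$, $y=ht$ we get $(gt\circ ht)^{*}=(g_{0}gh^{*})^{*}=hg^{*}g_{0}=g_{0}hg^{*}=ht\circ gt$.

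There is no genuine obstacle here; the only point requiring attention is bookkeeping, namely making sure to apply $(ab)^{*}=b^{*}a^{*}$ in the right order and to move $g_{0}$ past other factors using its centrality together with $g_{0}^{*}=g_{0}$. Since the eight subcases above exhaust the possibilities and each reduces immediately, this establishes that the extended $*$ is an antiautomorphism of $(Q\cup Qt,\circ)$ in either construction.
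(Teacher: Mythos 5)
Your proposal is correct and follows exactly the approach the paper intends: the paper's own proof simply states that the four cases of $(x\circ y)^{*}=y^{*}\circ x^{*}$ are straightforward and leaves them to the reader, and your computations carry out precisely those cases (correctly using the involutivity of $*$ and the hypotheses $g_{0}^{*}=g_{0}$, $g_{0}\in Z(Q)$ that are in force whenever either construction applies).
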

\begin{proof}
Let $x,y\in Q\cup Qt$.  Then, using either multiplication, $4$ straightforward cases are needed to verify $(x\circ y)^*=y^*\circ x^*$ and are left for the reader.
\end{proof}
\begin{theorem}
Let $Q$ be a semiautomorphic IP loop with $g_{0}=1$ and $*$ an antiautomorphism (which we can extend by Proposition \ref{P2}).  Let $Q_{1}=(Q\cup Qs,\circ)$ with multiplication from Theorem \ref{dBJconstruction} and $Q_{2}=(Q\cup (Qs)t,\circ_2)$ where we apply the multiplication from Theorem \ref{Cconstruction} twice.  Then $Q_{1}\cong Q_{2}$.
\end{theorem}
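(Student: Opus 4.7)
The plan is to exhibit an explicit bijection $\phi:Q_1\to Q_2$ and verify it preserves multiplication by checking four cases, one for each combination of the two cosets. The key observation driving the choice of $\phi$ is that the dBJ multiplication and the Chein multiplication differ by a reversal of the two arguments (compare $(gs)\circ h=(h^*g)s$ in Theorem \ref{dBJconstruction} with $(gs)\circ h=(gh^*)s$ in Theorem \ref{Cconstruction}), which suggests that the isomorphism must incorporate a $*$-twist on the $s$-coset.

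First I would write out the two multiplications. In $Q_1$ (from Theorem \ref{dBJconstruction} with $g_0=1$) we have $g\circ h=gh$, $g\circ(hs)=(gh)s$, $(gs)\circ h=(h^*g)s$, and $(gs)\circ(hs)=gh^*$. To obtain $Q_2$, I first apply Theorem \ref{Cconstruction} with indeterminate $s$, producing a multiplication $\bullet$ on $Q\cup Qs$. By Propositions \ref{P1} and \ref{P2}, the element $1$ remains central and $*$ extends to $Q\cup Qs$ via $(gs)^*=gs$, so I can apply Theorem \ref{Cconstruction} a second time with indeterminate $t$. Restricting to $Q\cup(Qs)t$ inside the resulting four-coset loop, a direct computation shows this subset is closed under $\circ_2$, with $g\circ_2 h=gh$, $g\circ_2((hs)t)=((hg^*)s)t$, $((gs)t)\circ_2 h=((gh)s)t$, and $((gs)t)\circ_2((hs)t)=g^*h$.

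Next I would define $\phi:Q_1\to Q_2$ by $\phi(g)=g$ for $g\in Q$ and $\phi(gs)=(g^*s)t$. Since $*$ is a bijection on $Q$, $\phi$ is a bijection. Verification of $\phi(x\circ y)=\phi(x)\circ_2\phi(y)$ splits into four routine cases using the formulas above together with $(ab)^*=b^*a^*$. For instance, in the mixed case $x=gs$, $y=h$, we have $\phi((gs)\circ h)=\phi((h^*g)s)=((h^*g)^*s)t=((g^*h)s)t$ while $\phi(gs)\circ_2\phi(h)=((g^*s)t)\circ_2 h=((g^*h)s)t$; the remaining three cases are analogous.

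The main obstacle is identifying the correct $*$-twist in $\phi$: the naive guess $gs\mapsto(gs)t$ fails precisely because the dBJ rule multiplies $g$ and $h$ in the opposite order from the Chein rule, and the twist $gs\mapsto(g^*s)t$ reconciles this via the antiautomorphism property. Once the map is chosen correctly the verification is symbol-pushing and uses only the multiplication formulas from Theorems \ref{Cconstruction} and \ref{dBJconstruction} together with Propositions \ref{P1}--\ref{P2}.
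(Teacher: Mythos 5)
Your proposal is correct and follows essentially the same route as the paper: you derive the same multiplication table for $Q_2$ on $Q\cup(Qs)t$, use the identical bijection $g\mapsto g$, $gs\mapsto(g^*s)t$, and verify the same four cases (one of which you work out explicitly where the paper leaves it to the reader).
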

\begin{proof}
Note the multiplication in $(Q_{2},\circ_2)$ is shown in Table \ref{table1}.
\begin{table}[h]
\centering
\begin{tabular}{|c|c|c|}\hline
$(Q_2,\circ_2)$&$h$&$(hs)t$\\\hline
$g$&$gh$&$((hg^*)s)t$\\\hline
$(gs)t$&$((gh)s)t$&$g^*h$\\\hline
\end{tabular}
\caption{Multiplication Table for $(Q_{2},\circ_{2})$.}
\label{table1}
\end{table}

Consider the bijection $\phi: Q_{1}\mapsto Q_{2}$ by 
\[
g\phi=g \quad (gs)\phi=(g^{*}s)t.
\]
To show $(x\circ_{1} y)\phi=x\phi\circ_{2} y\phi$ for all $x,y\in Q\cup Qt$, $4$ cases arise.  These are straightforward and left to the reader.
\end{proof}
\begin{proposition}
Let $Q$ be a semiautomorphic IP loop and $*$ an antiautomorphism of $Q$.  Let $c\in Z(Q)$ such that $c^{2}=1$ and $c^{*}=c$.  Then, reusing the symbol $*$, we extend $*$ on $Q\cup Qt$ as 
\begin{align*}
g^{*}&=g^{*},\\
(gt)^{*}&=c\cdot gt.
\end{align*}
Then in either construction, the extended $*$ is an antiautomorphism of $(Q\cup Qt,\circ)$.
\label{P3}
\end{proposition}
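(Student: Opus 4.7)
The plan is to verify the antiautomorphism identity $(x\circ y)^* = y^*\circ x^*$ for all $x,y\in Q\cup Qt$ in each of the two constructions, splitting into four cases according to whether $x$ and $y$ lie in $Q$ or in $Qt$, exactly parallel to the structure used in Proposition \ref{P2}.

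As a preliminary computation, I would unpack $(gt)^* = c\cdot gt = c\circ gt$: in the Chein construction this is $(c^*g^*)^*t = (gc)t$, and in the de Barros--Juriaans construction it is directly $(cg)t$; both equal $(cg)t$ since $c\in Z(Q)$. The same centrality together with $c^2 = 1$ then shows the extension is involutory: $((gt)^*)^* = ((cg)t)^* = (c^2 g)t = gt$. Since $*$ on $Q$ is already involutory, the extension is an involution on all of $Q\cup Qt$.

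Case 1 ($x,y\in Q$) reduces directly to $*$ being an antiautomorphism of $Q$. In Cases 2 ($x\in Q$, $y\in Qt$) and 3 ($x\in Qt$, $y\in Q$), both $x\circ y$ and $y^*\circ x^*$ land in $Qt$; each side acquires exactly one factor of $c$ (on the left, from applying $*$ to a $Qt$-element; on the right, from the unique $*$-image that lies in $Qt$), and the required equality reduces to the antiautomorphism property of $*$ on $Q$ after sliding $c$ through products using its centrality and nuclear status. For instance, in the Chein construction Case 2 gives $(g\circ ht)^* = ((hg)t)^* = (c(hg))t$ and $(ht)^*\circ g^* = ((ch)t)\circ g^* = ((ch)g)t = (c(hg))t$.

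The main obstacle is Case 4 ($x,y\in Qt$), where the product lands back in $Q$ with a $g_0$ factor, so the left-hand side $(x\circ y)^*$ uses only the original $*$ on $Q$ and acquires no $c$; on the right, however, both $x^* = (cg)t$ and $y^* = (ch)t$ carry a $c$, so $y^*\circ x^*$ produces two factors of $c$ that must cancel via $c^2 = 1$. Concretely, in the Chein construction, $(gt\circ ht)^* = (g_0(g^*h)^*)^* = g_0 g^*h$, while $(ht)^*\circ (gt)^* = ((ch)t)\circ ((cg)t) = g_0\bigl((ch)^*(cg)\bigr)^* = g_0(h^*g)^* = g_0 g^*h$, where the reduction of $(ch)^*(cg)$ to $h^*g$ uses $c^* = c$, centrality of $c$, and $c^2 = 1$. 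The de Barros--Juriaans case is parallel: $(gt\circ ht)^* = (g_0 g h^*)^* = g_0 h g^*$, and $(ht)^*\circ (gt)^* = g_0(ch)(cg)^* = g_0 c^2 h g^* = g_0 h g^*$ by the same three identities. Once Case 4 is verified, the proposition follows.
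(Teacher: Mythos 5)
Your proposal is correct and follows exactly the route the paper intends: the paper's own proof simply states that the four cases of $(x\circ y)^*=y^*\circ x^*$ are straightforward and left to the reader, and your case analysis (including the key cancellation of the two $c$ factors via $c^*=c$, centrality, and $c^2=1$ in the $Qt\times Qt$ case) supplies those details correctly for both constructions. No gaps.
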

\begin{proof}
Again, $4$ cases are needed to verify $(x\circ y)^*=y^*\circ x^*$ for all $x,y\in Q\cup Qt$.  These are straightforward and left to the reader.
\end{proof}
\begin{theorem}
Let $Q$ be a semiautomorphic IP loop with $g_{0}\in Q$, $g_{0}^{2}=1$ and $*$ an antiautomorphism extending as in Proposition \ref{P3} with $c=g_{0}$.  Then doubling $Q$ twice first using the multiplication in Theorem \ref{dBJconstruction} followed by the multiplication in Theorem \ref{Cconstruction} is equivalent to doubling $Q$ twice using the multiplication in Theorem \ref{Cconstruction} twice.
\label{twice}
\end{theorem}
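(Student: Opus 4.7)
The plan is to mirror the argument of the preceding theorem, with the bijection adjusted to absorb the extra factors of $g_0$ that now arise because $g_0\neq 1$. Let $L_1$ be the loop obtained by first applying Theorem \ref{dBJconstruction} to $Q$ (adjoining $s$) and then applying Theorem \ref{Cconstruction} to the result (adjoining $t$); let $L_2$ be the loop obtained by applying Theorem \ref{Cconstruction} twice (adjoining $s$, then $t$). In both constructions the antiautomorphism $*$ must first be extended from $Q$ to the intermediate loop before the second doubling, and the hypothesis $g_0^{2}=1$ together with the choice $c=g_0$ in Proposition \ref{P3} is exactly what guarantees that the extension remains an involutory antiautomorphism of the intermediate loop.

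First I would tabulate, for each of $L_1$ and $L_2$, the sixteen products $x\circ y$ where $x,y$ range over the four element types $g$, $gs$, $gt$, $(gs)t$. This is mechanical substitution into the formulas of Theorems \ref{dBJconstruction} and \ref{Cconstruction}, but requires care: on each side the extended $*$ now sends $gs\mapsto g_0\cdot gs$ (resp. $gt\mapsto g_0\cdot gt$) rather than being the identity on the outer layer, so factors of $g_0$ propagate into every product that involves an outer-level element.

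Next I would propose an explicit bijection $\phi\colon L_1\to L_2$. I would begin with the bijection from the previous theorem, namely $g\mapsto g$ and $gs\mapsto (g^{*}s)t$ on the ``$t$-less'' layer, and then extend it to the full loop by choosing images of $gt$ and $(gs)t$ so that the $g_0$ factors produced by Chein's second doubling on the $L_2$ side match those produced by dBJ's doubling on the $L_1$ side combined with the $c=g_0$ twist of the extended $*$. The final step is a case-by-case verification that $\phi$ preserves multiplication in each of the sixteen cases, using identities \eqref{A1}--\eqref{A3} of Lemma \ref{TM}, centrality of $g_0$, $g_0^{2}=1$, involutivity of $*$, and \eqref{RIF1}, \eqref{RIF2}.

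The main obstacle will be the bookkeeping of the $g_0$ factors. The ``outer-outer'' products yield $g_0$ in both constructions but in different positions ($g_0 h^{*}g$ in Chein, $g_0 g h^{*}$ in dBJ), and each application of the extended $*$ inserts an additional factor of $g_0$. The choice $g_0^{2}=1$ with $c=g_0$ is designed precisely so that all such stray factors pair up and cancel, but the pairing must be tracked carefully across every case. Once the right bijection has been assembled, each individual case reduces to a routine calculation of the same character as those already appearing in the proofs of Theorems \ref{dBJconstruction} and \ref{Cconstruction}, and in the spirit of the paper these would be summarized in a table and largely left to the reader.
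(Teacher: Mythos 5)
Your proposal follows the paper's proof essentially verbatim: the paper likewise writes out the full $4\times 4$ multiplication tables for both doubled loops and exhibits an explicit isomorphism extending the $g_0=1$ map of the preceding theorem, namely $g\phi=g$, $(gs)\phi=(g^{*}s)t$, $(gt)\phi=g_0\cdot gt$, $((gs)t)\phi=g^{*}s$, and then leaves the sixteen case verifications to the reader. The only datum you defer --- the images of $gt$ and $(gs)t$ --- is exactly what is displayed above, and your strategy of choosing them to absorb the stray $g_0$ factors (using $g_0^2=1$ and $c=g_0$) is precisely how that choice is forced.
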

\begin{proof}
Let 
\[
Q_{1}=((Q\cup Qs)\cup(Q\cup Qs)t,\circ_{1})=(Q\cup Qs\cup Qt \cup (Qs)t,\circ_{1})
\] 
be the loop formed by first using the doubling construction in Theorem \ref{dBJconstruction} and then doubled again using the multiplication in Theorem \ref{Cconstruction}. Similarly, define 
\[
Q_{2}=((Q\cup Qs)\cup(Q\cup Qs)t,\circ_{2})=(Q\cup Qs\cup Qt \cup (Qs)t,\circ_{2})
\] 
where we double $Q$ twice using the multiplication in Theorem \ref{Cconstruction} twice. Then we have the following tables.

\begin{table}[h]
\centering
\begin{tabular}{|c|c|c|c|c|}
\hline $(Q_1,\circ_{1})$ &$h$   &$hs$  &$ht$  &$(hs)t$  \\ 
\hline  $g$     & $gh$        & $(gh)s$                	& $(hg)t$            &  $((g^{*}h)s)t$ \\ 
\hline  $gs$    & $(h^{*}g)s$ & $g_{0}(gh^{*})$        	& $((hg)s)t$         &  $(g_{0}\cdot hg^{*})t$\\ 
\hline  $gt$    & $(gh^{*})t$ & $[g_{0}\cdot((gh)s)]t$  & $g_{0}(h^{*}g)$    &  $((g^{*}h)s)$\\ 
\hline  $(gs)t$ & $(((hg)s)t)$& $gh^{*}t$ 				& $g_{0}((h^{*}g)s)$ &  $g_{0}(hg^{*})$\\ 
\hline 
\end{tabular} 
\caption{Multiplication Table for $(Q_{1},\circ_{1})$.}
\end{table}

\begin{table}[h]
\centering
\begin{tabular}{|c|c|c|c|c|}
\hline $(Q_2,\circ_{2})$ &$h$   &$hs$  &$ht$  &$(hs)t$  \\ 
\hline  $g$     & $gh$        & $(hg)s$                		& $(hg)t$            	&  $((hg^{*})s)t$ \\ 
\hline  $gs$    & $(gh^{*})s$ & $g_{0}(h^{*}g)$        		& $((gh)s)t$         	&  $(g_{0}\cdot g^{*}h)t$\\ 
\hline  $gt$    & $(gh^{*})t$ & $[g_{0}\cdot ((hg)s)]t$     & $g_{0}(h^{*}g)$   	&  $((hg^{*})s)$\\ 
\hline  $(gs)t$ & $(((gh)s)t)$& $(h^{*}g)t$ 			& $g_{0}((gh^{*})s)$ 	&  $g_{0}(g^{*}h)$\\ 
\hline 
\end{tabular} 
\caption{Multiplication Table for $(Q_{2},\circ_{2})$.}
\end{table}
Consider the bijection $\phi: Q_{1}\rightarrow Q_{2}$ defined as:
\[
g\phi=g \qquad (gs)\phi = (g^{*}s)t \qquad (gt)\phi = g_{0}\cdot gt \qquad ((gs)t)\phi = g^{*}s.
\]
Let $x,y\in Q\cup Qt$.  Then, to verify $(x\circ_{1} y)\phi=x\phi\circ_{2} y\phi$, $16$ cases are needed.  Again, the calculations are straightforward and left to the reader.
\end{proof}

\section{The constructions on other varieties of loops and examples}
\label{conclusion}
A loop $Q$ is a \emph{C-loop} if $C_{0}(Q)=Q$ (\emph{i.e.} $x\cdot y(yz)=(xy)y\cdot z$ holds for all $x,y,z$).  Since $C$-loops are closely related to Moufang and Steiner loops, it is natural to see examples of $C$-loops arise in this context.
\begin{theorem}
Let $Q$ be a semiautomorphic IP loop, $g_{0}\in Z(Q)$, and $*$ an involutory antiautomorphism of $Q$ such that $g_{0}^{*} = g_{0}, gg^{*} \in Z(Q)$ for every $g \in Q$.  Then, using either multiplication in Theorem \ref{Cconstruction} or \ref{dBJconstruction}, the following are equivalent:
\begin{itemize}
\item [(i)] $(Q\cup Qt,\circ)$ is commutative. 
\item [(ii)] $g^{*}=g$ for all $g\in Q$.
\end{itemize} 
Moreover, if either hold, then $g^{2}\in Z(Q)$ for all $g\in Q$ and $Q$ is a commutative $C$-loop.
\label{commutativecase}
\end{theorem}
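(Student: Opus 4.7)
The plan is to handle the two implications and the moreover clause separately, each by short direct computations in the two multiplications.

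For (i) $\Rightarrow$ (ii), I would first note that $Q$ is a subloop of $(Q\cup Qt,\circ)$ in either construction, so commutativity of the extended loop restricts to $Q$. To extract $g^{\ast}=g$, I would test the single commutativity relation $g\circ 1t = 1t\circ g$: under Theorem \ref{Cconstruction} this becomes $(1\cdot g)t = (1\cdot g^{\ast})t$, while under Theorem \ref{dBJconstruction} it becomes $(g\cdot 1)t = (g^{\ast}\cdot 1)t$. Either equality forces $g=g^{\ast}$.

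For (ii) $\Rightarrow$ (i), I would first observe that the hypothesis $g^{\ast}=g$ together with $\ast$ being an antiautomorphism gives $gh = (gh)^{\ast} = h^{\ast}g^{\ast} = hg$, so $Q$ itself is commutative. Commutativity of $\circ$ then reduces to four cases: $g\circ h$ versus $h\circ g$, $g\circ ht$ versus $ht\circ g$, $gt\circ h$ versus $h\circ gt$, and $gt\circ ht$ versus $ht\circ gt$. Each case simplifies, via $g^{\ast}=g$ and $g_{0}\in Z(Q)$, to an instance of commutativity of $Q$; for example under Theorem \ref{Cconstruction} the most substantive case is $gt\circ ht = g_{0}h^{\ast}g = g_{0}hg = g_{0}gh = g_{0}g^{\ast}h = ht\circ gt$, and the analogous chain handles Theorem \ref{dBJconstruction}.

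For the moreover clause, under (ii) the standing hypothesis $gg^{\ast}\in Z(Q)$ immediately yields $g^{2} = gg^{\ast} \in Z(Q)$ for every $g\in Q$. Since $Z(Q)\subseteq N(Q)$, Lemma \ref{celement} then gives $g\in C_{0}(Q)$ for every $g\in Q$, so $C_{0}(Q)=Q$ and $Q$ is a $C$-loop; combined with the commutativity of $Q$ established in the previous paragraph, $Q$ is a commutative $C$-loop. The only real ``obstacle'' is the routine bookkeeping in (ii) $\Rightarrow$ (i), but once $Q$ is seen to be commutative each of the four cases collapses to a one-line rewrite.
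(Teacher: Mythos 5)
Your proposal is correct and follows essentially the same route as the paper: extract $g^{*}=g$ from a single commutativity instance involving an element of $Qt$, derive commutativity of $Q$ from $gh=(gh)^{*}=h^{*}g^{*}=hg$ for the converse, and obtain $g^{2}=gg^{*}\in Z(Q)$ together with Lemma \ref{celement} for the $C$-loop conclusion. The only difference is cosmetic (you test commutation with $1t$ rather than setting $g=1$ in $gt\circ h=h\circ gt$, and you spell out the four cases the paper leaves implicit).
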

\begin{proof}
Let $(Q\cup Qt,\circ)$ have the multiplication from Theorem \ref{Cconstruction}.  If $(Q\cup Qt,\circ)$ is commutative, then $(gh^{*})t=gt\circ h = h\circ gt = (gh)t$.  Letting $g=1$, we have the desired result.  Alternatively, $*$ is antiautomorphism of $Q$, and since $g^{*}=g$, we have $hg=h^{*}g^{*}=(gh)^{*}=gh$.  Hence, $(Q\cup Qt,\circ)$ is commutative.  Finally, if either $(i)$ or $(ii)$ holds, then $g^{2}=gg^{*}\in Z(Q)$ for all $g\in Q$.  Therefore, by \cite{chein3} and the fact that $Q$ is an IP loop, $Q$ is a commutative $C$-loop.
\end{proof}
\begin{example}
Let $Q$ be a Steiner loop of order $16$ with $|Z(Q)|=2$ (\emph{e.g.} \texttt{SteinerLoop(16,2)}). Let $g_{0}\in Z(Q)$, $g_0\neq 1$ and $g^{*}=g^{-1}$.  Then $(Q\cup Qt,\circ)$ with multiplication from Theorem \ref{Cconstruction} (or \ref{dBJconstruction}) is a commutative, non-Moufang, non-Steiner $C$-loop.
\end{example}
\begin{corollary}
Let $Q$ be a Steiner loop.  Then, for either multiplication in Theorem \ref{Cconstruction} or \ref{dBJconstruction}, if $(Q\cup Qt,\circ)$ is a Steiner loop, then $(Q\cup Qt,\circ)\cong Q\times Q$.
\end{corollary}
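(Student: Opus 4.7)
The plan is to unpack the Steiner hypothesis so that the parameters of the doubling data collapse to trivial values, and then write down the isomorphism directly.

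Since every Steiner loop is commutative, Theorem \ref{commutativecase} immediately forces $g^{*}=g$ for all $g\in Q$. This is automatically consistent with $Q$ itself being Steiner: every element is its own inverse, and commutativity of $Q$ gives $(gh)^{*}=h^{*}g^{*}=hg=gh$, so $*$ is indeed the identity map on $Q$. Next, I would apply the Steiner exponent-$2$ identity $x\circ x=1$ to $x=gt$: in either construction this evaluates to $gt\circ gt=g_{0}\cdot g^{*}g=g_{0}$, forcing $g_{0}=1$.

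Substituting $*=\mathrm{id}$ and $g_{0}=1$ into either the Chein or the de Barros--Juriaans multiplication, both collapse to the same simple formulas
\[
g\circ h=gh,\qquad g\circ(ht)=(gh)t,\qquad (gt)\circ h=(gh)t,\qquad (gt)\circ(ht)=gh,
\]
in which the $t$-label multiplies independently of the underlying $Q$-component. In particular $\{1,t\}$ is a subloop of $(Q\cup Qt,\circ)$ isomorphic to the $2$-element Steiner loop, $Q$ is a subloop, and every element of $Q\cup Qt$ factors uniquely as a product from these two commuting subloops. This factorisation is precisely a direct product decomposition, and the isomorphism $\phi:(Q\cup Qt,\circ)\to Q\times Q$ can then be written down as $\phi(g)=(g,1)$ and $\phi(gt)=(g,t)$ (where the second factor is realised inside $(Q\cup Qt,\circ)$ as the Steiner subloop $\{1,t\}$). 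A four-case verification that $\phi(x\circ y)=\phi(x)\phi(y)$ against componentwise multiplication then completes the proof.

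The main obstacle is bookkeeping: once the Steiner hypothesis has been used to force $*=\mathrm{id}$ and $g_{0}=1$, all the loop-theoretic content has been extracted, and only the four-case verification remains. A subtle point to flag is that the cardinality side-condition $|Q\cup Qt|=2|Q|$ means the second factor on the right must be interpreted via the $2$-element Steiner subloop generated by $t$; with that identification the displayed multiplication table is manifestly the direct product table, and the isomorphism follows at once.
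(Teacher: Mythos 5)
Your proposal is correct and follows essentially the same route as the paper: commutativity of the Steiner loop $(Q\cup Qt,\circ)$ plus Theorem \ref{commutativecase} forces $g^{*}=g$, and the Steiner identity applied to $gt\circ gt$ forces $g_{0}=1$, after which the multiplication visibly collapses to a direct product. The paper's own proof stops after these two deductions and leaves the isomorphism implicit; your explicit remark that the second direct factor must be realised as the two-element Steiner subloop $\{1,t\}$ (since $|Q\cup Qt|=2|Q|$ rules out a literal reading of $Q\times Q$) supplies a clarification the paper glosses over.
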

\begin{proof}
Let $(Q\cup Qt,\circ)$ have the multiplication from Theorem \ref{Cconstruction}.  If $(Q\cup Qt,\circ)$ is Steiner, hence commutative, then $g^{*}=g$ by Theorem \ref{commutativecase}.  Moreover, $1=gt\circ gt=g_{0}(g^{*}g)=g_{0}(g^{2})=g_{0}$.
\end{proof}
It is natural to ask what $(Q\cup Qt,\circ)$ would be if $Q$ started as a flexible loop satisfying \eqref{ARIF}.
\begin{theorem}  Let $Q$ be a flexible loop satisfying \eqref{ARIF}, $g_{0}\in Z(Q)$, and $*$ an involutory antiautomorphism of $Q$ such that $g_{0}^{*} = g_{0}, gg^{*} \in Z(Q)$ for every $g \in Q$.  For an indeterminate $t$, define multiplication $\circ$ on $Q \cup Qt$ by either multiplication in Theorem \ref{Cconstruction} or \ref{dBJconstruction}.  Then $(Q\cup Qt,\circ)$ is a flexible loop satisfying \eqref{ARIF}.
\label{ARIFconst}
\end{theorem}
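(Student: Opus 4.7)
The plan is to verify, by case analysis on whether elements lie in $Q$ or in $Qt$, that $(Q\cup Qt,\circ)$ is flexible and satisfies both \eqref{ARIF} identities, reusing the $\circ$-multiplication tables and the $*$-bookkeeping developed in Theorems \ref{Cconstruction} and \ref{dBJconstruction}. Crucially, flexible \eqref{ARIF} loops are IP (see \cite{KKP}), so Lemma \ref{TM} and the centrality of $gg^*$ and $g_0$ remain at our disposal; hence \eqref{A1}--\eqref{A3} may be invoked freely to push $*$'s and $g_0$'s around within $Q$, and the loop axioms for $\circ$ (as well as the inversion formula $(gt)^{-1}=(g_0^{-1}g^{-*})t$) transfer without change.

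For flexibility I would verify $(x\circ y)\circ x = x\circ(y\circ x)$ in the four cases determined by whether $x,y$ lie in $Q$ or in $Qt$. The case $x,y\in Q$ is immediate from flexibility of $Q$; the three mixed cases reduce, after expanding with the multiplication rules, to short equations in $Q$ that follow from \eqref{A1}--\eqref{A3}, the centrality of $g_0$, and the fact that $*$ is an involutory antiautomorphism. These should be as routine as the easier rows in the tables of the earlier theorems.

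The bulk of the work is verifying each \eqref{ARIF} identity; for instance $R_xR_{yxy}=R_{xyx}R_y$ unfolds to $(a\circ x)\circ(y\circ x\circ y)=(a\circ(x\circ y\circ x))\circ y$ with eight cases indexed by the location of $a,x,y$, and similarly for the dual. I would mirror the eight-case tables of Theorems \ref{Cconstruction} and \ref{dBJconstruction}, but rewrite each verification so that every appeal to \eqref{RIF1} or \eqref{RIF2} is replaced by an appeal to \eqref{ARIF} in $Q$. The main obstacle will be precisely those cases where the earlier proofs relied on the strictly stronger identities \eqref{RIF1} or \eqref{RIF2}: one must check, in each reduction, that what remains to be applied in $Q$ genuinely fits the \eqref{ARIF} template $(ax)(yxy)=(a\,xyx)y$ (or its left dual) rather than the RIF form, possibly after a reassembly of the computation using flexibility and \eqref{A1}--\eqref{A3}. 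Because flexible \eqref{ARIF} loops are properly more general than semiautomorphic IP loops, the earlier proofs cannot be copied verbatim at these spots, and isolating and rewriting these steps is the substantive part of the argument; the remaining ``$*$-bookkeeping'' cases should go through essentially unchanged.
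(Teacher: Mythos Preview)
Your proposal is correct and follows essentially the same route as the paper: verify one of the \eqref{ARIF} identities in $(Q\cup Qt,\circ)$ by the eight-case analysis on the locations of the three variables, using Lemma~\ref{L1} and Lemma~\ref{TM} (which are available because flexible \eqref{ARIF} loops are IP) to handle the $*$- and $g_0$-bookkeeping inside $Q$. The paper's proof is a one-line sketch (``eight cases \ldots\ using Lemmas~\ref{L1} and~\ref{TM}, it is straightforward''), so your plan is in fact more detailed than what the paper records; note that in an IP loop the two \eqref{ARIF} identities are equivalent via inversion, so you need only check one, and the separate flexibility check you outline is a minor addition the paper leaves implicit.
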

\begin{proof}
Again, eight cases arise for either multiplication.  Using Lemmas \ref{L1} and \ref{TM}, it is straightforward to show $(z\circ x)\circ (y\circ x\circ y)=(z\circ(x\circ y\circ x))\circ y$ or $(y\circ x\circ y)\circ (x\circ z)=y\circ ((x\circ y\circ x)\circ z)$.
\end{proof}

\begin{example}
Let G be the Symmetric Group on 3 letters (\emph{i.e.} \texttt{SymmetricGroup(3)}).  Define $g_{0}=1$ and $g^{*} = g^{-1}$ for all $g\in G$.  Then 
$(G\cup Gt,\circ)$ with multiplication from Theorem \ref{Cconstruction} gives a Moufang loop of order 12 (\emph{i.e} \texttt{MoufangLoop(12,1)}), the smallest example of a nonassociative Moufang loop \cite{chein1, chein2}.  Moreover, $(G\cup Gt,\circ)$ with multiplication from Theorem \ref{dBJconstruction} gives a semiautomorphic IP loop of order 12, the smallest example that is non-Moufang and non-Steiner \cite{KPV}.
\label{smallest}
\end{example}
Note that for Moufang and Steiner loops, the nucleus is always a normal subloop \cite{bruck,PV}.
\begin{example}
Let $Q$ be a commutative Moufang loop of order $81$ (\emph{e.g.} \texttt{MoufangLoop(81,1)}).  Define $g_{0}=1$ and $g^{*} = g^{-1}$ for all $g\in G$.  Then $(Q\cup Qt,\circ)$ with either multiplication from Theorem \ref{Cconstruction} (or \ref{dBJconstruction}) gives a semiautomorphic IP loop where $N(Q) \ntrianglelefteq \ Q$.
\end{example}

\begin{acknowledgment}
Some investigations in this paper were assisted by the automated deduction tool \textsc{Prover9} and the finite model builder \textsc{Mace4} both developed by McCune \cite{PM}.  Similarly, all presented examples were verified using the GAP system \cite{GAP} together with the LOOPS package \cite{GAPNV}.
\end{acknowledgment}

\bibliographystyle{amsplain}
\bibliography{bib}

\providecommand{\bysame}{\leavevmode\hbox to3em{\hrulefill}\thinspace}
\providecommand{\MR}{\relax\ifhmode\unskip\space\fi MR }
\providecommand{\MRhref}[2]{%
  \href{http://www.ams.org/mathscinet-getitem?mr=#1}{#2}
}
\providecommand{\href}[2]{#2}
\begin{thebibliography}{10}

\bibitem{bruck}
R.~H. Bruck, \emph{A {S}urvey of {B}inary {S}ystems}, Springer-Verlag, Berlin,
  1971.

\bibitem{chein1}
O.~Chein, \emph{Moufang loops of small order. {I}}, Transactions of the
  American Mathematical Society \textbf{188} (1974), 31--51.

\bibitem{chein2}
\bysame, \emph{Moufang loops of small order}, vol.~13, Memoirs, American
  Mathematical Society, 1978.

\bibitem{chein3}
\bysame, \emph{A short note on supernuclear (central) elements of inverse
  property loops}, Archiv der Mathematik \textbf{33} (1979), 131--132.

\bibitem{BJ1}
L.~G.~X. de~Barros and S.~O. Juriaans', \emph{Some loops whose loop algebras
  are flexible {I}}, International Journal of mathematics, Game Theory and
  Algebra \textbf{5} (1996), 180--205.

\bibitem{BJ2}
\bysame, \emph{Some loops whose loop algebras are flexible {I}{I}},
  International Journal of mathematics, Game Theory and Algebra \textbf{8}
  (1998), 73--80.

\bibitem{GAP}
The~GAP Group, \emph{Groups, {A}lgorithms, and {P}rogramming},
  http://www.gap-system.org (2008).

\bibitem{ikuta}
T.~Ikuta, \emph{Loops whose inner mappings are pseudo-automorphisms, {I}},
  Reports of Faculty of Science \textbf{3} (1968), 1--6.

\bibitem{KKP}
M.~K. Kinyon, K.~Kunen, and J.~D. Phillips, \emph{A generalization of {M}oufang
  and {S}teiner loops}, Algebra Universalis \textbf{48} (2002), 81--101.

\bibitem{KPV}
M.~K. Kinyon, J.~D. Phillips, and P.~Vojt\v{e}chovsk\'{y}, \emph{Loops of
  {B}ol-{M}oufang {T}ype with a subgroup of index two}, Bul. Acad. Stiinte
  Repub. Mold. Mat. \textbf{3} (2005), 71--87.

\bibitem{KV09}
M.~K. Kinyon and Petr Vojt\v{e}chovsk\'{y}, \emph{Primary {D}ecompositions in
  {V}arieties of commutative diassociative loops}, Communications in Algebra
  \textbf{37} (2009), 1428--1444.

\bibitem{PM}
W.~McCune, \emph{Prover9 and {M}ace4},
  http://www.cs.unm.edu/\~{}mccune/prover9/ (2009).

\bibitem{GAPNV}
G.~P. Nagy and P.~Vojt\v{e}chovsk\'{y}, \emph{Loops: {C}omputing with
  quasigroups and loops}, http://www.math.du.edu/loops (2008).

\bibitem{hala}
H.~O. Pflugfelder, \emph{Quasigroups and {L}oops: {I}ntroduction}, Sigma Series
  in Pure Math, Berlin, 1990.

\bibitem{JD}
J.D. Phillips, \emph{Local moufang laws, global and local}, Journal of Algebra
  and its applications \textbf{4} (2009), 477--492.

\bibitem{PV}
J.D. Phillips and Petr Vojt\v{e}chovsk\'{y}, \emph{C-loops: {A}n introduction},
  Publicationes Mathematicae Debrecen \textbf{68} (2006), 115--137.

\end{thebibliography}
\end{document}